\documentclass[11pt,a4paper]{article}
\usepackage[T1]{fontenc}
\usepackage[utf8]{inputenc}
\usepackage{lmodern,microtype}
\usepackage[margin=25mm]{geometry}
\usepackage{amsmath,amssymb,amsthm,mathtools,booktabs,array,longtable}
\usepackage{enumitem}
\usepackage[hidelinks]{hyperref}
\usepackage{xurl}
\hypersetup{
  pdftitle={The maximal determinant of order 15 over the third roots of unity},
  pdfauthor={Daoyu Dong},
  pdfsubject={Maximal determinants over third roots of unity},
  pdfkeywords={maximal determinant, roots of unity, Gram matrix, computer-assisted proof}
}
\usepackage{aliascnt}
\usepackage[nameinlink,noabbrev,capitalise]{cleveref}
\usepackage{fancyhdr}
\newtheorem{theorem}{Theorem}[section]
\newaliascnt{lemma}{theorem}
\newtheorem{lemma}[lemma]{Lemma}\aliascntresetthe{lemma}
\newaliascnt{proposition}{theorem}
\newtheorem{proposition}[proposition]{Proposition}\aliascntresetthe{proposition}
\newaliascnt{corollary}{theorem}
\newtheorem{corollary}[corollary]{Corollary}\aliascntresetthe{corollary}
\theoremstyle{remark}
\newaliascnt{remark}{theorem}
\aliascntresetthe{remark}
\crefname{theorem}{Theorem}{Theorems}
\crefname{lemma}{Lemma}{Lemmas}
\crefname{proposition}{Proposition}{Propositions}
\crefname{corollary}{Corollary}{Corollaries}
\crefname{remark}{Remark}{Remarks}
\DeclareMathOperator{\tr}{tr}\DeclareMathOperator{\rank}{rank}
\DeclareMathOperator{\diag}{diag}\DeclareMathOperator{\Aut}{Aut}
\DeclareMathOperator{\GL}{GL}
\newcommand{\Eis}{\mathbb Z[\omega]}
\newcommand{\muT}{\mu_3}
\newcommand{\N}{\operatorname N}
\newcommand{\Qset}{\mathcal Q}
\newcommand{\one}{\mathbf1}
\newcommand{\Bval}{277868041444786176}
\newcommand{\file}[1]{\path{#1}}
\newcommand{\U}{\mathcal U}
\setlist{nosep}
\title{The maximal determinant of order 15\\over the third roots of unity}
\author{Daoyu Dong\\[4pt]
  \small University of Electronic Science and Technology of China\\
  \small Chengdu, China}
\date{}
\begin{document}
\maketitle
\begin{abstract}
We prove that the largest absolute determinant of a $15\times15$ matrix with entries in $\{1,\omega,\omega^2\}$, where $\omega^3=1$ and $\omega\ne1$, is $120932352\sqrt{19}$. A matrix attaining this value was already known; the issue is maximality. The proof follows the Gram-matrix search method used in the real maximal-determinant problem. A sharp determinant bound in terms of the off-diagonal Gram energy, combined with Eisenstein congruences and a published classification of difference matrices, reduces a strict improvement to 23 possible energies. Componentwise Schur bounds and exact support catalogues then leave a small number of cases, excluded by orthogonality, rank, and spectral-intertwining conditions. We present the finite tests in a common framework and supply their replay programs. For the displayed extremizer we give a dephased representative, its Gram matrix and spectrum, and an exact computation of its monomial automorphism group, which has order 1008. The proof determines the value of the maximum, but does not classify all equality cases.
\end{abstract}

\section{Introduction}
Let $\omega=\exp(2\pi i/3)$ and $\muT=\{1,\omega,\omega^2\}$. We consider
\[
 D_3(n)=\max\{|\det H|:H\in\muT^{n\times n}\}.
\]
Hadamard's inequality gives $D_3(n)\le n^{n/2}$, with equality precisely when $HH^*=nI$ \cite{Hadamard,Butson}. At order 15 the equality case is impossible. For example, $\det H\in\Eis$, whereas $15^{15}$ is not an Eisenstein norm: the inert prime 5 occurs to an odd exponent. This is a global obstruction, not a pairwise one. Two length-15 third-root vectors can be orthogonal, since their coordinatewise quotients can contain each third root five times.

The record value listed by Nu\~nez Ponasso at this order is $120932352\sqrt{19}$. Nu\~nez Ponasso's table records the corresponding squared determinant, after division by $3^{14}$, as $2^{22}3^6 19$, without a maximality proof \cite{Nunez}. We prove that this value is optimal. The construction of a matrix with this determinant is not a contribution of the present paper.

\begin{theorem}\label{thm:main}
For every $H\in\muT^{15\times15}$,
\[
 |\det H|^2\le B:=2^{22}3^{20}19=\Bval.
\]
Equality is attained by the matrix in \cref{app:matrix}. Consequently,
\[
 D_3(15)=2^{11}3^{10}\sqrt{19}=120932352\sqrt{19},
 \qquad \frac{D_3(15)}{15^{15/2}}=0.7965900126\ldots.
\]
\end{theorem}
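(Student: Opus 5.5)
The plan follows the Gram-matrix search outlined in the abstract. Suppose, for contradiction, that some $H\in\muT^{15\times15}$ has $|\det H|^2>B$. Put $G=HH^*$. This is a Hermitian positive definite matrix with diagonal entries $15$ and off-diagonal entries in $\Eis$, and $\det G=|\det H|^2$.

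First we constrain the off-diagonal entries. Each entry $g_{ij}$ is a sum of $15$ third roots of unity, so $g_{ij}=a+b\omega+c\omega^2$ with $a+b+c=15$. Modulo $1-\omega$ this forces $g_{ij}\equiv 15\equiv 0$, so $g_{ij}\in(1-\omega)\Eis$ and hence $\N(g_{ij})\in\{0,3,9,12,21,\dots\}$. Define the off-diagonal energy $E=\sum_{i\ne j}|g_{ij}|^2$. Writing $G=15I+A$ with $\tr A=0$ and $\tr A^2=E$, the sharp bound comes from maximizing $\prod(15+\lambda_k)$ subject to $\sum\lambda_k=0$ and $\sum\lambda_k^2=E$. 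The extremum is reached with only two distinct eigenvalues, which gives an explicit decreasing function $\Phi(E)$ with $\det G\le\Phi(E)$. The requirement $\Phi(E)>B$ then gives an upper bound $E\le E_{\max}$.

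Next we use arithmetic to cut down the admissible energies. Each $|g_{ij}|^2$ lies in $3\mathbb Z$, and there are further congruences: the row-sum/column-sum structure modulo $3$ and modulo $(1-\omega)^2$, and the parity of the number of nonzero entries. We also need a lower bound on $E$. The case $E=0$ is excluded by the norm obstruction at $5$ noted in the introduction. Small energies would force large sets of mutually orthogonal rows, which amounts to a difference matrix over $\mathbb Z_3$. The published classification of such difference matrices rules out configurations with too many orthogonal rows, and so raises the minimum possible $E$. The intended outcome is a finite list of energies, about 23 of them.

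For each remaining energy we enumerate the possible multisets of off-diagonal norms, which we call support catalogues. We then sharpen the bound componentwise. Applying Fischer/Schur-complement bounds, $\det G\le\det G_{11}\det(G_{22}-G_{21}G_{11}^{-1}G_{12})$, to principal blocks that contain the large entries eliminates most patterns. The few cases that survive are handled by exact structural tests:
\begin{itemize}
\item rows realizing a zero Gram entry must be orthogonal as third-root vectors;
\item $\rank$ constraints for near-singular blocks;
\item spectral intertwining (Cauchy interlacing) between $G$ and its $14\times14$ principal submatrices, whose determinants must themselves be realizable.
\end{itemize}
We expect this final stage, disposing of the survivors after the Schur bounds, to be the main obstacle. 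It is inherently computational and needs carefully certified programs.

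Finally, equality is shown by exhibiting the matrix of \cref{app:matrix} and computing $\det G=B$ exactly. The value of $D_3(15)$ then follows by taking square roots, since $\sqrt{2^{22}3^{20}19}=2^{11}3^{10}\sqrt{19}$.
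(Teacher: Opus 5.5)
The gap is in your final stage, which you correctly flag as the crux. Your early steps (envelope bound, Eisenstein arithmetic, the difference-matrix bound, Schur complements) match the paper's outline. But the final stage uses only conditions on the single matrix $G=HH^*$, and these cannot decide whether a candidate Gram factors over $\muT$.

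\begin{itemize}
\item Cauchy interlacing holds for every Hermitian matrix, so it excludes nothing.
\item A $14\times14$ principal submatrix of $G$ is the Gram of a $14\times15$ matrix. By Cauchy--Binet its determinant is a sum of fifteen squared minors. Apart from divisibility by $3^{13}$, it obeys no norm-type realizability condition; the Eisenstein-norm sieve applies only to $\det G$ itself.
\item The zero-entry test has content only through $M_3(15)=9$ applied to independent sets of the support.
\item Your rank constraints have no source within $G$ alone.
\end{itemize}

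Here is a concrete survivor. At $Q=81$ take support $K_3\sqcup6K_2$, with every nonzero entry equal to $3$ times a unit. The four possible triangle fluxes ($1$, $-1$, $\omega^{\pm1}$, $-\omega^{\pm1}$) give positive definite Grams with
\[
 \det G\in\{2^{22}3^{21}7,\ 2^{20}3^{24},\ 2^{18}3^{21}109,\ 2^{18}3^{22}37\}.
\]
Each value exceeds $B$ and is an Eisenstein norm, and the support has independence number $7$. Every test you list passes, yet none of these Grams is realizable.

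What you are missing is a genuine factorization test. One option is an actual decomposition search for each surviving Gram. The columns $h$ of $H$ satisfy $h^*G^{-1}h=1$ and are mutually $G^{-1}$-orthogonal, so you can scan the $3^{14}$ columns normalized to first entry $1$ and look for fifteen compatible ones.

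The paper instead uses two-sided structure. It passes to $K=H^*H$, which has the same spectrum and energy, and uses $(HH^*)H=H(H^*H)$, i.e.\ $TH=HS$ with $T=(G-15I)/3$ and $S=(K-15I)/3$. This, not interlacing, is the ``spectral intertwining'' of the abstract. From it come:
\begin{itemize}
\item \cref{lem:projection}: isolated rows on one side lie in $\ker S$. Hence an isolate forbids a leaf on the other side and forces the nullity bounds behind $\det G=15^{15-d}3^dR$ at $Q=117,126$.
\item \cref{lem:mixed-rank}, for color type $(14,1)$.
\item \cref{thm:rectangle}: $X=H_{I,J}$ satisfies $T_IX=XS_J$, whence $|I||J|\le15d-3c_1$. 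This removes $Q=90$, $Q=99$, and three of the four triangle spectra above. The negative-flux triangle still needs the explicit decomposition argument of \cref{app:q81}.
\end{itemize}

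There is also a secondary gap. The congruence behind the $23$ energies and the four admissible color partitions is \cref{lem:colors}: $G_{ij}\equiv(r_j-r_i)(1-\omega)\pmod3$, where $r_i$ is the row exponent sum. So same-color norms lie in $9\mathbb Z$, while cross-color norms are $\equiv3\pmod9$ and never vanish. If you know only $G_{ij}\in(1-\omega)\Eis$, the orthogonality bound gives merely $Q\ge18$ with $3\mid Q$. That leaves about fifty energies rather than $23$, and no reduction of color partitions.
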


\subsection{The real problem and the Gram-matrix method}
The real maximal-determinant problem provides both a precedent and a warning. For matrices with entries $\pm1$, the order modulo 4 strongly constrains the off-diagonal Gram entries. Barba's bound for odd $n$ is $\sqrt{2n-1}(n-1)^{(n-1)/2}$. For $n\equiv2\pmod4$, the Ehlich--Wojtas bound is $(2n-2)(n-2)^{(n-2)/2}$. Ehlich's treatment of $n\equiv3\pmod4$ uses a more detailed block structure. These bounds, their equality conditions, and their limitations are reviewed in \cite{Survey}.

When a general bound does not settle an order, one can first enumerate possible positive definite Gram matrices and then test whether they factor over the prescribed alphabet. This strategy is already present in the work of Moyssiadis and Kounias on order 17 \cite{MK82}, and in the subsequent order-21 calculation of Chadjipantelis, Kounias and Moyssiadis \cite{CKM87}. Orrick used a Gram-candidate and decomposition search to settle the real order-15 problem \cite{Orrick}; related methods were later used at orders 19 and 37 \cite{Brent}. We credit this line of work for the general search strategy used here.

Several parts of that strategy transfer without change to complex matrices: positivity, Schur complements, eigenvalue bounds, and the identity $(HH^*)H=H(H^*H)$. The arithmetic does not. Real parity conditions are replaced here by congruences in $\Eis$, and a squared determinant need only be an Eisenstein norm, not an integer square. Moreover, the vanishing of a third-root inner product imposes balanced differences, a stronger condition than ordinary Hamming equidistance. Results for other complex alphabets, such as fourth-root designs \cite{Cohn}, cannot simply be substituted for third-root results.

\subsection{Organization of the search}
Write
\begin{equation}\label{eq:energy}
 G=HH^*,\qquad K=H^*H,\qquad
 Q=\sum_{1\le i<j\le15}\N(G_{ij})
   =\frac{\tr(G^2)-3375}{2},
\end{equation}
where $\N(z)=z\bar z$. The matrices $G$ and $K$ have the same eigenvalues and the same energy $Q$. Thus $2Q$ is the squared Frobenius norm of the off-diagonal part of either Gram matrix.

The proof starts with a hypothetical strict improvement $\det G>B$. The determinant envelope in \cref{thm:envelope} gives $Q\le168$. Congruence and coding constraints then leave the finite set $\Qset$ in \cref{cor:shells}. For each energy, only finitely many off-diagonal entries are possible: every entry is a sum of 15 third roots. We enumerate supports and their admissible labels, apply upper bounds before evaluating determinants, and test the surviving Grams for necessary properties of a third-root factorization. A positive definite Gram with determinant above $B$ is not itself a counterexample; it must also be realizable as $HH^*$.

The classification of difference matrices by Lampio and \"Osterg\aa rd \cite{LO} is particularly useful. It says that the maximum number of mutually orthogonal third-root rows of length 15 is \emph{nine}. This is stronger than the upper bound 12 obtained from ordinary ternary equidistant codes \cite{TB}. It removes several configurations that otherwise require separate decomposition arguments. The use of this external classification is specified in \cref{thm:orthogonality,sec:repro}.

\section{Arithmetic, orthogonality, and the finite energy set}
\subsection{Eisenstein arithmetic and row colors}
For $z=a+b\omega$,
\[
 \bar z=(a-b)-b\omega,\qquad \N(z)=a^2-ab+b^2.
\]
The Euclidean ring $\Eis$ has six units, $\{\pm1,\pm\omega,\pm\omega^2\}$. A positive integer is an Eisenstein norm if and only if every prime congruent to 2 modulo 3 has even valuation. We use this criterion as an exact sieve; it does not assert that every norm is the squared determinant of a third-root matrix. Subtracting one row of $H$ from the others also shows that $3^{14}$ divides $|\det H|^2$.

Write $H_{ij}=\omega^{e_{ij}}$ and assign row $i$ the color
$r_i=\sum_j e_{ij}\pmod3$. Put $\delta=1-\omega$, so $\N(\delta)=3$ and $\delta^2=-3\omega$.
\begin{lemma}\label{lem:colors}
For $i\ne j$,
\[
 G_{ij}\equiv(r_j-r_i)\delta\pmod{3\Eis}.
\]
A nonzero same-color inner product has norm at least 9, and its norm is divisible by 9. A different-color inner product has norm congruent to 3 modulo 9 and is never zero. If the color multiplicities are $a,b,c$, then
\begin{equation}\label{eq:color-energy}
 Q\ge3(ab+ac+bc),\qquad Q\equiv3(ab+ac+bc)\pmod9.
\end{equation}
In particular, $Q\equiv0$ or $6\pmod9$.
\end{lemma}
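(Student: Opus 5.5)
The plan is to reduce every entry to the local ring $\Eis/3\Eis$, using that $\delta=1-\omega$ generates the unique prime above $3$ and that $3\Eis=\delta^2\Eis$.

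\textbf{Step 1: the congruence.} Write $G_{ij}=\sum_k\omega^{e_{ik}-e_{jk}}$. Since $\omega=1-\delta$, the binomial expansion gives
\[
 \omega^m=(1-\delta)^m\equiv1-m\delta\pmod{\delta^2\Eis}
\]
for $m\in\{0,1,2\}$. The case $m=2$ uses $\delta^2\equiv0$, and the value is well defined modulo $3$ because $3\delta\in\delta^2\Eis$. Summing over $k$ gives
\[
 G_{ij}\equiv15-\delta\sum_k(e_{ik}-e_{jk})\pmod{3\Eis}.
\]
Since $15\equiv0\pmod3$, this becomes $G_{ij}\equiv(r_j-r_i)\delta\pmod{3\Eis}$.

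\textbf{Step 2: same-color entries.} If $r_i=r_j$, then $3\mid G_{ij}$, so $G_{ij}=3w$ with $w\in\Eis$. Hence $\N(G_{ij})=9\N(w)$. This is divisible by $9$, and if $G_{ij}\ne0$ it is at least $9$, because nonzero Eisenstein integers have norm at least $1$.

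\textbf{Step 3: different-color entries.} If $r_i\ne r_j$, then $G_{ij}=\epsilon\delta+3w$ with $\epsilon=\pm1$. Write $G_{ij}=\delta u$ with $u=\epsilon+\delta^{\,}\bar{\ }$-free expression $u=\epsilon-\bar\delta w$; this uses $3=-\omega^2\delta^2$, more simply $3/\delta=\bar\delta$. Then $u\equiv\epsilon\pmod\delta$, so $u$ is coprime to $\delta$ and $\N(u)\not\equiv0\pmod3$. Norms are values $a^2-ab+b^2$, and these are never $\equiv2\pmod3$, so $\N(u)\equiv1\pmod3$. Therefore $\N(G_{ij})=3\N(u)\equiv3\pmod9$. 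In particular $G_{ij}\ne0$.

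\textbf{Step 4: the energy.} Sum $\N(G_{ij})$ over pairs $i<j$. Exactly $ab+ac+bc$ pairs have different colors, and each such pair contributes at least $3$ and a value $\equiv3\pmod9$. Same-color pairs contribute a value $\ge0$ and $\equiv0\pmod9$. Adding these gives both parts of \eqref{eq:color-energy}.

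\textbf{Step 5: the final residue.} With $a+b+c=15$,
\[
 ab+ac+bc=\frac{225-(a^2+b^2+c^2)}{2}.
\]
We need $ab+ac+bc\equiv0$ or $2\pmod3$, that is, $\equiv1$ is excluded. Squares are $0$ or $1$ modulo $3$, and $a+b+c\equiv0\pmod3$. Up to permutation, the residues of $(a,b,c)$ modulo $3$ are $(0,0,0)$, $(1,1,1)$, $(2,2,2)$ or $(0,1,2)$.
\begin{itemize}
 \item For $(0,0,0)$, $a^2+b^2+c^2\equiv0$.
 \item For $(1,1,1)$ and $(2,2,2)$, $a^2+b^2+c^2\equiv0$.
 \item For $(0,1,2)$, $a^2+b^2+c^2\equiv2$.
\end{itemize}
Since $225\equiv0$ and $2^{-1}\equiv2\pmod3$, we get $ab+ac+bc\equiv0$ in the first two cases and $ab+ac+bc\equiv-2\cdot2\equiv2$ in the third. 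So $3(ab+ac+bc)\equiv0$ or $6\pmod9$.

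Each step is routine. The only place needing care is Step 3, the unit argument showing $\N(u)\equiv1\pmod3$.
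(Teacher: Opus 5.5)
Your proof is correct and follows the paper's argument: reduce $\omega^e\equiv 1-e\delta$ modulo $3\Eis=\delta^2\Eis$, read off the norm residues of same-color and different-color entries, and sum over the $ab+ac+bc$ cross pairs. The only cosmetic difference is in Step 3, where you factor $G_{ij}=\delta u$ and use that norms are squares modulo 3, while the paper expands $\N(\pm\delta+3w)$ directly; the garbled line there should read $u=\epsilon+\bar\delta w$ (since $3=\delta\bar\delta$), but the sign is immaterial because $w$ is arbitrary.
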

\begin{proof}
Modulo $3\Eis$, $\omega^e=(1-\delta)^e\equiv1-e\delta$. Summing the 15 coordinatewise quotients gives the first assertion. If $z\equiv0\pmod3$, its norm is divisible by 9. If $z\equiv\pm\delta\pmod3$, direct expansion of $\N(z)$ gives $\N(z)\equiv3\pmod9$. Finally, $a+b+c=15$ implies $ab+ac+bc\equiv0$ or 2 modulo 3.
\end{proof}

The complete entry catalogue is obtained without search over matrices. If a row difference contains $n_0,n_1,n_2$ occurrences of $0,1,2$, then
\begin{equation}\label{eq:entry-catalogue}
 z=(n_0-n_2)+(n_1-n_2)\omega,
 \qquad n_0+n_1+n_2=15,\quad n_i\ge0.
\end{equation}
There are $\binom{17}{2}=136$ triples to examine. This catalogue supplies the allowed norms and associate classes to every subsequent enumeration.

\subsection{The exact orthogonality parameter}
Let $M_p(n)$ denote the maximum number of mutually orthogonal rows in $\mu_p^n$, where $p$ is prime. Let $B_p(n,d)$ denote the maximum size of a $p$-ary code in which every two distinct words have Hamming distance exactly $d$.
\begin{proposition}\label{prop:prime-code}
Let $p$ be prime. Two rows of $p$th roots of unity are orthogonal if and only if each element of $\mathbb Z_p$ occurs equally often among their exponent differences. Consequently, $M_p(n)=1$ if $p\nmid n$; when $p\mid n$,
\[
 M_p(n)\le B_p\bigl(n,n(p-1)/p\bigr).
\]
More precisely, $M_p(n)$ equals the maximum number of rows in a difference matrix with $n$ columns over $\mathbb Z_p$.
\end{proposition}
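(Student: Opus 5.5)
The plan is to reduce everything to a single fact about vanishing sums of $p$th roots of unity, and then read off the three assertions. Write two rows as $x_k=\zeta^{a_k}$, $y_k=\zeta^{b_k}$ with $\zeta=e^{2\pi i/p}$, and put $d_k=a_k-b_k\in\mathbb Z_p$. Their inner product is $\sum_k\zeta^{d_k}=\sum_{t=0}^{p-1}m_t\zeta^t$, where $m_t$ is the number of $k$ with $d_k=t$. The key step is that, for $p$ prime, the minimal polynomial of $\zeta$ over $\mathbb Q$ is $\Phi_p(x)=1+x+\dots+x^{p-1}$, of degree $p-1$. Hence the integer polynomial $f(x)=\sum_t m_tx^t$, of degree at most $p-1$, vanishes at $\zeta$ iff $f=c\,\Phi_p$ for a rational constant $c$. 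Equivalently, all $m_t$ are equal. The converse is immediate, since $\sum_t\zeta^t=0$. This gives the first assertion.

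For the consequences, take the balanced multiset of differences. Each $m_t$ equals $n/p$, so orthogonality forces $p\mid n$. Thus when $p\nmid n$ no two rows are orthogonal, and $M_p(n)=1$. When $p\mid n$, exactly $n/p$ coordinates have $d_k=0$. So the exponent vectors of two orthogonal rows, viewed as $p$-ary words of length $n$, are at Hamming distance exactly $n-n/p=n(p-1)/p$. A mutually orthogonal family of $M$ rows therefore gives $M$ distinct words, pairwise at this fixed distance, and $M\le B_p(n,n(p-1)/p)$. The words are distinct because $n(p-1)/p>0$.

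For the final equality, I will identify a set of mutually orthogonal rows with the exponent matrix $E\in\mathbb Z_p^{M\times n}$. By the first assertion, the rows are mutually orthogonal iff for every pair $i\neq j$ the difference $E_i-E_j$ contains each element of $\mathbb Z_p$ exactly $n/p$ times. This is precisely the (row) definition of a difference matrix over $\mathbb Z_p$ with $n$ columns, usually written $(p,M;n/p)$ in the convention where rows are compared. If the paper's convention is columns, the same statement holds after transposing. The correspondence is a bijection, $E\mapsto(\zeta^{E_{ik}})$, so the maxima agree.

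The only genuinely nontrivial step is the irreducibility of $\Phi_p$; I will invoke it as standard (Eisenstein's criterion applied to $\Phi_p(x+1)$). Beyond that, the main care needed is bookkeeping with conventions: whether differences are taken as $a-b$ or $b-a$ (irrelevant by symmetry), and the orientation in the definition of a difference matrix.
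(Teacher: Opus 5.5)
Your proposal is correct and follows the paper's proof: both write the inner product as $f(\zeta)$ with $f(x)=\sum_t m_t x^t$ of degree at most $p-1$, use that $1+x+\cdots+x^{p-1}$ is the minimal polynomial of $\zeta$ to force all $m_t$ equal, and then apply this to each pair of rows to get the constant Hamming distance $n(p-1)/p$ and the difference-matrix description. Your extra remarks spell out what the paper leaves implicit: that balance forces $p\mid n$, that the resulting words are distinct, and the row-versus-column convention for difference matrices.
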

\begin{proof}
Let $n_a$ count the coordinate differences equal to $a$, and let $\zeta$ be a primitive $p$th root. Orthogonality is $f(\zeta)=0$ for $f(x)=\sum_{a=0}^{p-1}n_ax^a$. The minimal polynomial of $\zeta$ is $1+x+\cdots+x^{p-1}$, of the same degree as the largest possible degree of $f$. Thus all coefficients $n_a$ are equal. The number of nonzero differences is then $n(p-1)/p$. Applying this to every pair of rows gives both conclusions.
\end{proof}

\begin{theorem}[Published orthogonality bounds at length 15]\label{thm:orthogonality}
The ordinary ternary equidistant-code parameter satisfies $B_3(15,10)=12$ \cite{TB}. The stronger, balanced-difference parameter satisfies
\[
 M_3(15)=9
\]
by the difference-matrix classification of Lampio and \"Osterg\aa rd \cite{LO}.
\end{theorem}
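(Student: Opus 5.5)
The statement has two halves, and both are imported rather than proved from scratch; my plan is to make the translation into the paper's language explicit and then check the small amount that can be checked directly.

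\emph{The equidistant-code value.} By \cref{prop:prime-code}, $M_3(15)\le B_3(15,10)$, since pairwise orthogonal rows have exponent differences taking each value in $\mathbb Z_3$ exactly five times, hence Hamming distance exactly $10$. The value $B_3(15,10)=12$ I would take from the tables of ternary equidistant codes in \cite{TB}. As a sanity check on the upper bound, I would run the standard column-counting argument. If a code has $m$ words, then summing pairwise distances column by column gives $\binom m2\cdot10\le 15\cdot\max\sum_{a<b}x_ax_b$, where $x_0+x_1+x_2=m$. This Plotkin-type estimate only gives a weaker bound, so the exact value $12$ must be cited; the citation is also where attainment comes from.

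\emph{The balanced parameter.} By the last assertion of \cref{prop:prime-code}, a set of $m$ pairwise orthogonal rows in $\muT^{15}$ is exactly an $m\times15$ difference matrix over $\mathbb Z_3$. Normalizing one row to zero, the remaining rows are the nonzero rows of a $(3,5;\,\cdot)$-type difference matrix with $15$ columns. Lampio and \"Osterg\aa rd classify these difference matrices with $15$ columns over $\mathbb Z_3$, and their result is that the maximum number of rows is $9$. Under the dictionary above, this is $M_3(15)=9$.

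For the lower bound, I would exhibit or cite an explicit $9\times15$ difference matrix and verify it. This is a finite check: $\binom92=36$ pairs, each balanced. The same check can be included in the replay programs.

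\emph{Main obstacle.} The delicate step is matching conventions exactly. Some papers count a difference matrix including or excluding the zero row, and some normalize columns, which does not change orthogonality. I also need to confirm that Lampio--\"Osterg\aa rd's classification covers the group $\mathbb Z_3$ with $\lambda=5$ exhaustively, so that ``$9$'' is the maximum and not just the largest size found. I would resolve this by reading their definition, checking that row-differences balanced over $\mathbb Z_3$ is precisely their condition, and recording the normalization in \cref{sec:repro}.
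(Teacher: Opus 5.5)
Your proposal matches the paper: it too gives no internal proof, importing $B_3(15,10)=12$ from the equidistant-code tables and $M_3(15)=9$, including attainment, from the Lampio--\"Osterg\aa rd classification, with \cref{prop:prime-code} serving only as the dictionary between orthogonal rows and difference matrices. Two slips do not affect the argument. First, your column-counting check is not merely weaker but vacuous, since $d=10=2n/3$ turns it into $5m(m-1)\le 5m^2$, which holds for every $m$. Second, in the standard $(g,k;\lambda)$ notation the relevant objects are $(3,k;5)$ difference matrices with $k$ rows, not $(3,5;\cdot)$.
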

The equality in the second statement, including existence of nine rows, is an external classification result. We do not derive it from the first statement or rerun that classification. An equidistant code of size 12 need not have balanced nonzero differences, so the first equality does \emph{not} imply $M_3(15)=12$.

For a Gram matrix, let its support graph join $i$ and $j$ when $G_{ij}\ne0$. Every independent set is a set of mutually orthogonal rows. Hence every realizable principal submatrix has support independence number at most nine. In particular, a same-color class of size $a$ requires at least $\max(0,a-9)$ nonzero pairs: deleting one endpoint of each of $m$ edges leaves an independent set of size at least $a-m$.

\subsection{A sharp trace--variance determinant bound}
The following elementary positive-definite-matrix result is the analytic basis of the energy search. Its statement is independent of the alphabet.
\begin{theorem}[Sharp determinant envelope]\label{thm:envelope}
Let $A$ be positive semidefinite of order $m\ge2$, with $\tr A=m\mu$, $\mu>0$. Set
\[
 V=\tr(A^2)-m\mu^2,\qquad
 t=\sqrt{\frac{V}{m(m-1)}}.
\]
Then $0\le t\le\mu$ and
\begin{equation}\label{eq:envelope}
 \det A\le (\mu-t)^{m-1}\bigl(\mu+(m-1)t\bigr).
\end{equation}
For $0<t<\mu$, equality holds exactly when the eigenvalues are $\mu-t$ with multiplicity $m-1$ and $\mu+(m-1)t$ with multiplicity one. The bound is also sharp among matrices with constant diagonal $\mu$.
\end{theorem}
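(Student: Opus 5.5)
The plan is to work entirely with the eigenvalues $\lambda_1,\dots,\lambda_m\ge0$ of $A$. Then $\sum\lambda_i=m\mu$ and $\sum(\lambda_i-\mu)^2=\tr(A^2)-2\mu\tr A+m\mu^2=V$, so $V\ge0$ and $t$ is well defined. For $t\le\mu$, I would maximize $\sum\lambda_i^2$ subject to $\lambda_i\ge0$ and $\sum\lambda_i=m\mu$. The maximum is attained at $(m\mu,0,\dots,0)$, where $V=m^2\mu^2-m\mu^2=m(m-1)\mu^2$, which gives $t\le\mu$. This reduces \eqref{eq:envelope} to a purely real extremal problem: maximize $\sum\log\lambda_i$ on the intersection of the hyperplane $\sum\lambda_i=m\mu$, the sphere $\sum(\lambda_i-\mu)^2=V$, and the nonnegative orthant. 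If some $\lambda_i=0$ the bound is trivial, since the right side is $\ge0$ because $t\le\mu$. So I may assume all $\lambda_i>0$. The feasible set is compact, so a maximizer exists.

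The key step is a Lagrange multiplier argument at an interior maximizer (with $0<t$). The condition reads $1/\lambda_i=\alpha+\beta\lambda_i$ for all $i$, i.e.\ $\beta\lambda_i^2+\alpha\lambda_i-1=0$. Hence the $\lambda_i$ take at most two distinct values. I would rule out the degenerate case (all equal) because $t>0$. The main obstacle is then to determine which two-value configuration is optimal. Suppose $k$ eigenvalues equal $\mu-s$ and $m-k$ equal $\mu+ks/(m-k)$, with $s$ fixed by the variance constraint. I need to show that $k=m-1$, with the single large eigenvalue, maximizes the determinant. I would try first a cleaner route avoiding comparison over $k$: a second-order or smoothing argument. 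Suppose two of the eigenvalues are equal to the larger value $\beta_+$, say $\lambda_1=\lambda_2=\beta_+>\mu$, and one is $\beta_-$. Perturb within the constraint set, moving the three coordinates along the circle that preserves their sum and their sum of squares. A direct computation of the second derivative of $\log\lambda_1+\log\lambda_2+\log\lambda_3$ along this circle at the symmetric point should show that it is not a local maximum. The third elementary symmetric function on that circle is linear in one parameter, so $\lambda_1\lambda_2\lambda_3$ is monotone there; this makes the check concrete. Hence at a maximum at most one eigenvalue exceeds $\mu$. With exactly one large eigenvalue, the sum and variance constraints force the small ones to equal $\mu-t$ and the large one to be $\mu+(m-1)t$. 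This gives \eqref{eq:envelope}.

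For the equality statement when $0<t<\mu$, the same argument shows that the maximizer is unique up to permutation. Boundary points with some $\lambda_i=0$ give determinant $0$, which is strictly less than $(\mu-t)^{m-1}(\mu+(m-1)t)>0$. So equality holds exactly for that spectrum. For sharpness with constant diagonal $\mu$, take $A=(\mu-t)I+tJ$, which has diagonal $\mu$ and exactly this spectrum. Its $\tr(A^2)$ equals $m\mu^2+m(m-1)t^2$, consistent with $V$. This completes the plan.
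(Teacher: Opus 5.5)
Your proposal is correct and takes essentially the paper's route: eigenvalue reduction, compactness plus Lagrange multipliers giving at most two values, a second-order argument excluding a repeated larger value, the moment equations, and the same example $(\mu-t)I+tJ$. The only variation is the second-order step: the paper evaluates the Hessian of the Lagrangian on the tangent vector $(1,-1)$, while you move along the explicit circle of three coordinates with fixed sum and sum of squares, on which the product is monotone. Do state explicitly that the maximizer itself has positive coordinates, since its product is at least $\det A>0$; that is what justifies applying the Lagrange conditions.
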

\begin{proof}
Optimize the product of the nonnegative eigenvalues subject to their first two moments. The feasible set is compact. For $0<V<m(m-1)\mu^2$, it contains the positive tuple stated in the theorem, so a maximizing tuple is interior. Lagrange multipliers for the logarithm of the product give
\[
 \frac1{x_i}=\alpha+2\beta x_i.
\]
There are at most two positive coordinate values, say $a<b$. Subtracting their equations gives $2\beta=-1/(ab)$. If $b$ occurred twice, the tangent vector with entries $1,-1$ in those two coordinates would preserve both constraints to first order. The Hessian of the Lagrangian on that vector would be positive, since its diagonal entry there is $-1/b^2+1/(ab)>0$, contrary to the second-order necessary condition for a maximum. Thus $b$ occurs once. Solving the two moment equations gives $a=\mu-t$ and $b=\mu+(m-1)t$.

The cases $V=0$ and $V=m(m-1)\mu^2$ are respectively scalar and rank-one spectra. For sharpness with prescribed diagonal, take
$A=(\mu-t)I+mtuu^*$ with $|u_i|=m^{-1/2}$.
\end{proof}
For a block of order $m$ with diagonal 15 and energy $q$, write
\[
 \U_m(q)=\left(15-\sqrt{\frac{2q}{m(m-1)}}\right)^{m-1}
 \left(15+(m-1)\sqrt{\frac{2q}{m(m-1)}}\right).
\]
In particular, $\det G\le\U_{15}(Q)$, where the square root is $\sqrt{Q/105}$. The right side decreases strictly with $Q>0$. The tail cutoff is exact: with $t_0=319/250$,
\begin{equation}\label{eq:tail}
 t_0^2<171/105,\qquad
 (15-t_0)^{14}(15+14t_0)<B.
\end{equation}
Both comparisons are rational integer comparisons after clearing denominators. Therefore a strict improvement has $Q<171$, and by \cref{lem:colors} it has $Q\le168$.

\begin{corollary}[Finite energy set]\label{cor:shells}
If $\det G>B$, then
\begin{equation}\label{eq:qset}
\begin{split}
 Q\in\Qset=\{&54,63,72,81,87,90,96,99,105,108,114,117,\\
             &123,126,132,135,141,144,150,153,159,162,168\}.
\end{split}
\end{equation}
Using only $B_3(15,10)=12$ instead of the sharp difference-matrix theorem gives the weaker 29-element set
\[
 \Qset\cup\{27,36,45,60,69,78\}.
\]
Using no coding bound gives the 38 integers $0\le Q\le168$ congruent to 0 or 6 modulo 9.
\end{corollary}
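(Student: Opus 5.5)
The proof of \cref{cor:shells} combines three facts: the cutoff $Q\le168$ from \cref{eq:tail}, the residue and lower bound of \cref{lem:colors}, and a sharpened lower bound obtained from the orthogonality parameter. First I make the lower bound precise. Let the color multiplicities be $a,b,c$ with $a+b+c=15$, and put $e_2=ab+ac+bc=(225-a^2-b^2-c^2)/2$. By \cref{lem:colors}, each of the $e_2$ different-color pairs contributes a norm $\equiv3\pmod 9$ and at least $3$. Each nonzero same-color pair contributes a norm divisible by $9$ and at least $9$. By the independence argument after \cref{thm:orthogonality}, a color class of size $x$ contains at least $\max(0,x-M)$ nonzero pairs, where $M$ is the orthogonality bound in use. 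This gives
\[
 Q\ge 3e_2+9\sum_{x\in\{a,b,c\}}\max(0,x-M),\qquad Q\equiv3e_2\pmod9 .
\]
The residue of $Q$ is therefore fixed by $e_2\bmod3$. Since $15\equiv0\pmod3$, either $e_2\equiv0$ and $Q\equiv0\pmod9$, or $e_2\equiv2$ and $Q\equiv6\pmod9$.

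Next I minimize this bound over unordered triples with $M=9$, separately for each residue class. For $Q\equiv0\pmod9$, the triple $(15,0,0)$ gives $Q\ge9\cdot6=54$. Any other triple with $e_2\equiv0$ has $e_2\ge27$ (for example $(13,1,1)$), so its bound is at least $81$. For $Q\equiv6\pmod9$, the candidates are:
\begin{itemize}
 \item $(14,1,0)$, with $e_2=14$ and bound $42+45=87$;
 \item $(13,2,0)$, with bound $78+36=114$;
 \item $(12,2,1)$, with bound $114+27=141$.
\end{itemize}
All remaining triples have $e_2\ge 50$ and so bounds at least $150$. The minimum in this class is therefore $87$. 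Intersecting with $Q\le168$ leaves $\{54,63,\dots,162\}$ for residue $0$ and $\{87,96,\dots,168\}$ for residue $6$. Together these are exactly the 23 values in \cref{eq:qset}. Every value above the minimum in each class is retained, since no finer argument is being claimed.

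For the two weaker statements I repeat the same minimization. With $M=12$, taken from $B_3(15,10)=12$ via \cref{prop:prime-code}:
\begin{itemize}
 \item $(15,0,0)$ gives $27$, which adds $27,36,45$ to the residue-$0$ list;
 \item $(14,1,0)$ gives $42+18=60$, which adds $60,69,78$ to the residue-$6$ list.
\end{itemize}
This yields 29 elements. With no coding bound, the lower bound reduces to $Q\ge3e_2\ge0$ together with the congruence. That leaves all $0\le Q\le168$ with $Q\equiv0,6\pmod9$, which is $19+19=38$ integers.

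The main obstacle is only bookkeeping: making sure no triple with $e_2\equiv2\pmod3$ yields a bound below $87$. I would settle this by listing all partitions of $15$ into at most three parts, about $27$ of them, and tabulating $e_2$ together with the correction term. Such a table is routine to verify by hand or with the replay scripts.
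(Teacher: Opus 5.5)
Your proposal is correct and follows essentially the paper's own route: the bound $Q\ge3e_2+9\sum\max(0,x-M)$ with its congruence, tabulated over color triples for $M=9$ and $M=12$ and intersected with $Q\le168$. One small slip: not every remaining residue-$6$ triple has $e_2\ge50$, since $(11,4,0)$ and $(11,3,1)$ have $e_2=44$ and $47$, but their bounds $150$ and $159$ still exceed $87$, so the conclusion is unaffected.
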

\begin{proof}
A pure-color support requires at least six edges, and hence $Q\ge54$. For a mixed coloring with largest class $a\ge10$,
\[
 Q\ge3a(15-a)+9(a-9)\ge87 \qquad(10\le a\le14).
\]
If every class has size at most nine, the cross-pair count is at least 54, giving $Q\ge162$. Combining these bounds with \cref{lem:colors,thm:envelope} gives \eqref{eq:qset}. With orthogonality bound 12, the corresponding minima are 27 for a pure coloring and 60 for a mixed coloring. Equivalently, all claims follow by listing $a\ge b\ge c\ge0$, $a+b+c=15$, and applying
\[
 Q\ge3(ab+ac+bc)+9\sum_{x\in\{a,b,c\}}\max(0,x-M),
 \quad Q\equiv3(ab+ac+bc)\pmod9
\]
with $M=9$ or $M=12$.
\end{proof}

\section{Tools for the finite Gram search}\label{sec:tools}
All bounds below are necessary conditions. We may enlarge a candidate family when doing so makes enumeration simpler; an exclusion of the enlarged family remains valid. We never identify row and column Gram supports without justification.

\subsection{Reduction of the color partitions}\label{sec:partitions}
\begin{lemma}\label{lem:partitions}
In a strict improvement, the row and column color partitions each belong to
\[
 (15),\quad(14,1),\quad(13,2),\quad(13,1,1).
\]
\end{lemma}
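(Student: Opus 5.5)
The plan is to combine the finite energy set of \cref{cor:shells} with the color lower bound behind it, and to treat the row coloring and the column coloring separately. The column coloring is the row coloring of $H^{\mathsf T}$, and $H^{\mathsf T}$ has the same energy $Q$ because $G$ and $K$ share their spectrum. So it suffices to prove the statement for rows. Let the color multiplicities be $a\ge b\ge c$ with $a+b+c=15$. By \cref{lem:colors} and the orthogonality bound $M_3(15)=9$ from \cref{thm:orthogonality},
\[
 Q\ge L(a,b,c):=3(ab+ac+bc)+9\sum_{x\in\{a,b,c\}}\max(0,x-9).
\]
The strict-improvement hypothesis forces $Q\le168$, by the tail cutoff \eqref{eq:tail} together with the congruence $Q\equiv0,6\pmod9$. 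The first step is therefore to tabulate $L$ over all partitions of 15 into at most three parts and discard those with $L>168$.

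Next I would work through this table. The pure partition $(15)$ has $L=54$, and $(14,1)$ has $L=42+45=87$. For $(13,2)$ the value is $78+36=114$, and for $(13,1,1)$ it is $3(13+13+1)+36=81+36=117$. These four survive. The remaining partitions are handled as follows.
\begin{itemize}
\item $(12,3)$: $L=108+27=135$, which is also $\le168$.
\item $(12,2,1)$: $L=3(24+12+2)+27=114+27=141$.
\item $(11,4)$: $L=132+18=150$.
\item $(11,3,1)$: $L=3(33+11+3)+18=141+18=159$.
\item $(11,2,2)$: $L=3(22+22+4)+18=144+18=162$.
\item $(10,\cdot)$ partitions give values near 159--$162+$.
\item Partitions with every part $\le9$: $L\ge162$ by the cross-pair count used in \cref{cor:shells}.
\end{itemize}
The bare energy bound thus leaves many partitions alive, so the lemma cannot follow from \cref{cor:shells} alone. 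A sharper determinant bound is needed, one that exploits the color structure rather than the total energy only.

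The refined step is the following. Order the rows by color, so that $G$ is a block matrix whose diagonal blocks come from the color classes. Every cross-block entry has norm at least 3, and in fact norm congruent to 3 modulo 9 (\cref{lem:colors}). Within a class of size $x>9$, at least $x-9$ entries have norm at least 9. Apply Fischer's inequality in the form $\det G\le\det G_{11}\det(S)$ with a Schur complement, or more simply bound $\det G$ through the envelope applied with the energy split between blocks. The cleaner route is to apply \cref{thm:envelope} to the whole of $G$ but to use its equality characterization. The envelope optimum requires a rank-one-plus-scalar spectrum. With forced cross-block entries of fixed nonzero residue, that spectrum is incompatible with unequal class sizes unless the energy is small. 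Concretely, I would replace the scalar bound by an inequality for the $3\times3$ quotient of block averages: the averaged cross entries contribute at least $3bc$-type mass concentrated in a low-rank part. I would then check numerically that $\U$-type bounds fall below $B$ for every partition other than the four listed. This is a finite, exact rational comparison per partition, as in \eqref{eq:tail}.

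The main obstacle is exactly this refined inequality. The trace--variance envelope alone does not exclude partitions such as $(12,3)$ with $Q=135$. I would first try the Schur-complement bound on the minor class, which gives
\[
 \det G\le\U_a(q_a)\cdot\det(G_{BB}-G_{BA}G_{AA}^{-1}G_{AB}).
\]
Here I would bound the second factor using the fact that the cross entries are nonzero Eisenstein integers congruent to $\pm\delta$. If that is too weak, I would fall back on enumerating the small minor blocks (order at most 5) together with their cross-entry labels from the catalogue \eqref{eq:entry-catalogue}, and bounding each resulting configuration directly.
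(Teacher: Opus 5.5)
Your tabulation of $L(a,b,c)$ is correct. It also leaves $(9,6)$ at $162$ and $(8,7)$ at $168$, which your list passes over. You correctly see that the energy bound alone cannot prove the lemma. But the refined inequality you call the main obstacle is the whole content of the lemma, and the proposal never states or verifies one. The route through the equality case of \cref{thm:envelope} is not quantitative. Knowing that the rank-one-plus-scalar spectrum is not attained says nothing about how far $\det G$ falls below $\U_{15}(Q)$, and the deficit needed is not small: at $Q=135$, $\U_{15}(Q)\approx1.079B$. The ``$3\times3$ quotient of block averages'' is never formulated.

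Your Schur formula $\det G\le\U_a(e)\det(G_{BB}-G_{BA}G_{AA}^{-1}G_{AB})$ is the right frame. However, the input you propose for the second factor, that cross entries are $\equiv\pm\delta$ and hence have norm at least $3$, does not bound it on its own. The missing ingredient is an eigenvalue cap $L\ge\lambda_{\max}(G_{AA})$ (the paper's $L$, not your $L(a,b,c)$). With it, $\tr(G_{BA}G_{AA}^{-1}G_{AB})\ge c/L$ with $c\ge3a(15-a)$, and AM--GM on the Schur complement gives exactly \eqref{eq:partition-bound}. Since $\U_a(e)$ decreases while the available cap $L$ grows with the internal energy $e$, each admissible $e$ must be checked separately. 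The trace--variance cap $L=15+\sqrt{2e(a-1)/a}$ does not always suffice: for $a=12$, $e=36$, $c=108$ it gives about $1.0035B$. So the sharper cap $L=22$, obtained by listing four-edge supports, is genuinely needed.

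The paper also first applies the same bound with $A$ an internally orthogonal class, so $e=0$ and $L=15$. This gives $\det G\le15^a(15-a/5)^{15-a}<B$ for $3\le a\le12$. Hence every class of size $3$ to $12$ carries internal energy at least $9$ in a strict improvement. That removes $(9,6)$ and $(8,7)$ and leaves exactly the six partitions $(12,3),(12,2,1),(11,4),(11,3,1),(11,2,2),(10,5)$ for \eqref{eq:partition-bound}. Without these inequalities, stated and checked case by case, your argument establishes only the coarse table, not the lemma.
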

\begin{proof}
Suppose a same-color class of size $a$ is internally orthogonal. Its Gram block is $15I_a$. The Schur complement on the other $15-a$ coordinates has trace at most $(15-a)(15-a/5)$, because the cross energy is at least $3a(15-a)$. Thus
\[
 \det G\le15^a(15-a/5)^{15-a}<B\qquad(3\le a\le12).
\]
The displayed inequalities are rational. Consequently, each color class of size between 3 and 12 has internal energy at least 9 in a strict improvement. Listing the partitions subject to this fact and $Q\le168$ leaves the four stated partitions and
\[
 (12,3),\ (12,2,1),\ (11,4),\ (11,3,1),\ (11,2,2),\ (10,5).
\]
For the latter six, use a largest-class block $A$ of size $a$, internal energy $e$, and eigenvalue upper bound $L$. If its cross energy is at least $c$, Schur complementation and the arithmetic--geometric mean inequality give
\begin{equation}\label{eq:partition-bound}
 \det G\le\U_a(e)\left(15-\frac{c}{L(15-a)}\right)^{15-a}.
\end{equation}
The complete list of parameters needed is in \cref{tab:partition}. Every resulting bound is less than $B$. The small-energy eigenvalue caps follow by listing the supports with at most four edges; for the remaining cases the trace--variance eigenvalue bound suffices. The entry catalogue and this finite list are regenerated by \file{proof/order15_mu3_unified_audit/programs/verify_foundations.py}.
\end{proof}
\begin{table}[ht]
\centering
\caption{Parameters in \eqref{eq:partition-bound}; entries in each row are paired in order.}\label{tab:partition}
\begin{tabular}{@{}cccc@{}}\toprule
$a$ & $e$ & $L$ & $c$\\\midrule
10 & $9$ & $18$ &150\\
11 & $9,18,27$ & $20,21,23$ &132\\
12 & $9,18,27,36,45,54$ & $18,77/4,21,22,241/10,25$ &108\\\bottomrule
\end{tabular}
\end{table}

\subsection{Componentwise Schur bounds}
Consider a majority-color block $A=\bigoplus_i A_i$, where component $i$ has $v_i$ vertices and internal energy $9u_i$. Let $D_i\ge\det A_i$ and $L_i\ge\lambda_{\max}(A_i)$. Isolated vertices are included as components with $(v_i,u_i,D_i,L_i)=(1,0,15,15)$.

\begin{lemma}\label{lem:schur}
For a partition $(14,1)$ with total cross energy $c$, set
\[
 s_{14}=\sum_i\frac{3v_i}{L_i}+\frac{c-42}{\max_iL_i}.
\]
Then
\begin{equation}\label{eq:schur14}
 \det G\le\left(\prod_iD_i\right)\max\{0,15-s_{14}\}.
\end{equation}
For a size-13 majority block and two remaining coordinates, let $c$ be the energy from the majority to those coordinates and let $h$ be the norm of their mutual inner product. Put
\[
 s_{13}=\sum_i\frac{6v_i}{L_i}+\frac{c-78}{\max_iL_i}.
\]
If $s_{13}<30$, then
\begin{equation}\label{eq:schur13}
 \det G\le\left(\prod_iD_i\right)
 \left[(15-s_{13}/2)^2-
       \max\{0,\sqrt h-s_{13}/2\}^2\right].
\end{equation}
If $s_{13}\ge30$, no positive definite candidate is possible.
\end{lemma}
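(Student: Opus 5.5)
We write $G$ in block form with the majority block $A$ (of order 14 or 13) and the remaining coordinates forming $C$, the cross block being $X$. Then
\[
 \det G=\det A\cdot\det(C-X^*A^{-1}X).
\]
The estimate $\det A\le\prod_iD_i$ is immediate from the block-diagonal structure. The work is to bound the Schur complement $S=C-X^*A^{-1}X$.

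For the case $(14,1)$, $S$ is the scalar $15-x^*A^{-1}x$, where $x$ is the cross column. We need a lower bound on $x^*A^{-1}x$. On component $i$, every eigenvalue of $A_i$ is at most $L_i$, so $A_i^{-1}\succeq L_i^{-1}I$. Hence
\[
 x^*A^{-1}x\ge\sum_i\frac{\|x^{(i)}\|^2}{L_i}.
\]
By \cref{lem:colors}, each cross entry joins different colors and so has norm at least $3$. Thus $\|x^{(i)}\|^2\ge3v_i$. The remaining mass $c-42=\sum_i(\|x^{(i)}\|^2-3v_i)\ge0$ is placed where it costs least, namely divided by $\max_iL_i$. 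This gives $x^*A^{-1}x\ge s_{14}$. Since $\det G\ge0$ always, we may truncate at $0$, and \eqref{eq:schur14} follows.

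For the size-13 case, $S$ is a $2\times2$ Hermitian matrix. Write $X=[x\ y]$. The same argument applied to $\tr(X^*A^{-1}X)=x^*A^{-1}x+y^*A^{-1}y$ gives $\tr(X^*A^{-1}X)\ge s_{13}$. Here there are $26$ cross entries, each of norm at least $3$, which accounts for the baseline $78$ and the coefficient $6v_i$ (three per vertex per column). Therefore $\tr S\le30-s_{13}$. If $s_{13}\ge30$, this trace is nonpositive, so $S$ cannot be positive definite; that disposes of the last assertion.

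Otherwise, $\det S=\tfrac14(\tr S)^2-\tfrac14(S_{11}-S_{22})^2-|S_{12}|^2\le(\tr S/2)^2-|S_{12}|^2$. It remains to show $|S_{12}|\ge\max\{0,\sqrt h-s_{13}/2\}$. We have $S_{12}=G_{12}'-x^*A^{-1}y$, where $|G'_{12}|=\sqrt h$ is the entry between the two remaining coordinates. So $|S_{12}|\ge\sqrt h-|x^*A^{-1}y|$.

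By Cauchy--Schwarz in the $A^{-1}$ inner product and AM--GM,
\[
 |x^*A^{-1}y|\le\sqrt{x^*A^{-1}x\;y^*A^{-1}y}\le\tfrac12\tr(X^*A^{-1}X)=:\tfrac12 T.
\]
Write $T$ for this trace, so that $\tr S=30-T$. This gives
\[
 \det S\le(15-T/2)^2-\max\{0,\sqrt h-T/2\}^2 .
\]

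The main obstacle is the last step. The lower bound on $T$ pushes in the wrong direction for the second term, so we cannot simply substitute $T\ge s_{13}$ in both places. Instead we check monotonicity: the function $f(T)=(15-T/2)^2-\max\{0,\sqrt h-T/2\}^2$ should be nonincreasing in $T$ on $[0,30]$. When $\sqrt h>T/2$, we have $f(T)=(15-\sqrt h)(15+\sqrt h-T)$, which is decreasing in $T$ provided $\sqrt h\le15$. That condition holds because $G$ is positive semidefinite, so $h\le15^2$. When $\sqrt h\le T/2$, $f(T)=(15-T/2)^2$, which is decreasing on $T\le30$. In addition, $f$ is continuous at the switch point.

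Hence $f(T)\le f(s_{13})$, and multiplying by $\prod_iD_i$ yields \eqref{eq:schur13}.
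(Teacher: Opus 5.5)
Your proposal is correct and follows essentially the same route as the paper: Schur complement against the majority block, the componentwise bound $A_i^{-1}\succeq L_i^{-1}I$ with minimum cross norm $3$, Cauchy--Schwarz and AM--GM for the off-diagonal Schur entry, and monotonicity of $(15-s/2)^2-\max\{0,\sqrt h-s/2\}^2$ on $[0,30]$ using $\sqrt h\le15$. You spell out steps the paper leaves implicit, namely the explicit $2\times2$ determinant identity, the triangle-inequality lower bound on $|S_{12}|$, and the factored form used to check monotonicity.
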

\begin{proof}
For one remaining coordinate, write $G=\left(\begin{smallmatrix}A&z\\z^*&15\end{smallmatrix}\right)$. Each coordinate of $z$ has squared modulus at least 3. Therefore
$z^*A^{-1}z\ge\sum_i\|z_i\|^2/L_i\ge s_{14}$, proving \eqref{eq:schur14}.

For two coordinates, write $U^*A^{-1}U=\left(\begin{smallmatrix}x&z\\\bar z&y\end{smallmatrix}\right)$ and $s=x+y$. Then $s\ge s_{13}$ and $|z|\le\sqrt{xy}\le s/2$. The determinant of the remaining Schur block is at most
\[
 (15-s/2)^2-\max\{0,\sqrt h-s/2\}^2.
\]
This expression is decreasing for $0\le s\le30$, since $\sqrt h\le15$ for a positive semidefinite Gram. Its substitution at $s_{13}$ gives the result. A positive definite Schur block must have positive trace $30-s$.
\end{proof}
These estimates retain the loss coming from every isolate. Discarding isolates, or assuming that one can choose the orientation so that they disappear, would invalidate the coverage of several boundary cases.

\subsection{Component determinant bounds}\label{sec:moments}
For each component we take the least of the applicable bounds: the sharp envelope, an exact small-support determinant maximum, and a fourth-moment refinement. For a tree with unit edge norms after division by 3, phases can be switched away for the purpose of computing its spectrum. All nonisomorphic trees of the required order are listed. For connected supports with at most seven vertices and energy at most ten units, all graph-atlas supports, norm assignments, and chord phases are evaluated exactly.

For larger components the following bound avoids an unnecessarily large phase enumeration.
\begin{lemma}\label{lem:moment}
Let $A=15I_v+E\succ0$, with $E$ Hermitian, zero diagonal, $\tr(E^2)=18u$, $\rank E\le d$, and $\lambda_{\max}(A)\le L$. If
\[
 W\ge\sum_{\{i,j,k\}}\frac{|E_{ij}E_{jk}E_{ki}|}{27},
\]
where the sum is over support triangles, then
\begin{equation}\label{eq:moment}
 \log\frac{\det A}{15^v}
 \le-\frac u{25}+\frac{2W}{125}-\frac{3u^2}{125Ld}.
\end{equation}
\end{lemma}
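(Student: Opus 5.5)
The plan is to write $A=15(I_v+X)$ with $X=E/15$ and bound $\log\det(A/15)=\sum_k\log(1+x_k)$ eigenvalue by eigenvalue. Here $x_k$ are the eigenvalues of the Hermitian matrix $X$. Positive definiteness gives $x_k>-1$. The hypothesis $\lambda_{\max}(A)\le L$ gives $x_k\le b:=L/15-1$. We also have $b\ge0$, since $\tr A=15v$ forces $\lambda_{\max}(A)\ge15$. I would use a quartic upper bound for $\log(1+x)$ valid on $(-1,b]$ and then sum, so that only the power traces $\tr X^j$ for $j\le4$ enter.

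\textbf{Scalar inequality.} With $\kappa=\frac1{4(1+b)}=\frac{15}{4L}$, I claim
\[
 \log(1+x)\le x-\frac{x^2}2+\frac{x^3}3-\kappa x^4\qquad(-1<x\le b).
\]
Let $h$ be the right side minus the left side. Differentiating,
\[
 h'(x)=1-x+x^2-4\kappa x^3-\frac1{1+x}=x^3\Bigl(\frac1{1+x}-4\kappa\Bigr).
\]
On $(-1,b]$ we have $\frac1{1+x}\ge\frac1{1+b}=4\kappa$, so $h'$ has the sign of $x^3$. Hence $h$ attains its minimum at $0$, where $h(0)=0$, and so $h\ge0$ on the interval.

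\textbf{Summing over eigenvalues.} Summing the scalar inequality over the spectrum gives
\[
 \log\frac{\det A}{15^v}\le \tr X-\tfrac12\tr X^2+\tfrac13\tr X^3-\kappa\tr X^4 .
\]
Each trace is then evaluated or bounded as follows.
\begin{itemize}
\item The diagonal of $E$ vanishes, so $\tr X=0$.
\item From $\tr(E^2)=18u$ we get $\tr X^2=18u/225=2u/25$. This produces the term $-u/25$.
\item Because $E$ has zero diagonal, $\tr E^3=\sum E_{ij}E_{jk}E_{ki}$ runs over ordered triples of distinct indices. Each support triangle contributes six terms, namely three cyclic rotations and their complex conjugates. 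Hence $\tr E^3=6\sum_{\{i,j,k\}}\operatorname{Re}(E_{ij}E_{jk}E_{ki})\le 6\cdot27W$. Dividing by $3\cdot15^3$ gives $\tfrac13\tr X^3\le 162W/(3\cdot3375)=2W/125$.
\item $X$ has at most $d$ nonzero eigenvalues, so Cauchy--Schwarz gives $\tr X^4\ge(\tr X^2)^2/d=4u^2/(625d)$. Since $\kappa>0$, this yields $-\kappa\tr X^4\le-\frac{15}{4L}\cdot\frac{4u^2}{625d}=-\frac{3u^2}{125Ld}$.
\end{itemize}
Adding these contributions gives \eqref{eq:moment}.

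\textbf{Main obstacle.} The only delicate step is the scalar inequality. The plain Taylor truncation with coefficient $\tfrac14$ on $x^4$ fails for $x>0$, so the quartic coefficient must be weakened according to the upper bound $b$ on the spectrum. This is exactly where $L$ enters, and the derivative factorization above is designed to handle it. The sign bookkeeping in $\tr E^3$ needs only the remark that the real part is bounded by the modulus.
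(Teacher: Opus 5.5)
Your proof is correct and matches the paper's argument: the same quartic bound $\log(1+x)\le x-x^2/2+x^3/3-x^4/(4(1+\rho))$ with $\rho=L/15-1$, followed by the same substitutions $\tr X=0$, $\tr E^3\le162W$ and $\tr E^4\ge(18u)^2/d$. The paper obtains the quartic bound by integrating $1/(1+x)=1-x+x^2-x^3/(1+x)$, which is the same identity as your derivative factorization, and you additionally spell out two details the paper leaves implicit: $L\ge15$ follows from $\tr A=15v$, and $\tr E^3\le 6\sum|E_{ij}E_{jk}E_{ki}|$ follows from bounding each real part by its modulus.
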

\begin{proof}
For $-1<x\le\rho$, $\rho\ge0$, integration of
$1/(1+x)=1-x+x^2-x^3/(1+x)$ gives
\[
 \log(1+x)\le x-x^2/2+x^3/3-\frac{x^4}{4(1+\rho)}.
\]
Apply this to the eigenvalues of $E/15$, with $\rho=(L-15)/15$. Their first moment is zero. Also $\tr E^3\le162W$ and $\tr E^4\ge(18u)^2/d$. Substitution gives \eqref{eq:moment}.
\end{proof}
A valid choice of eigenvalue cap is
\[
 L=15+\sqrt{18u(v-1)/v}.
\]
If the support has clique number at most $k$, the sharper cap $15+\sqrt{18u(k-1)/k}$ is valid. Indeed, Cauchy--Schwarz bounds $|x^*Ex|^2$ by $36u\sum_{ij\in E(\Gamma)}|x_i|^2|x_j|^2$. For nonnegative weights summing to one, the latter edge sum is at most $(k-1)/(2k)$: repeatedly move the mass between a nonadjacent pair onto one vertex without decreasing the sum, until the support of the weights is a clique, then use Cauchy--Schwarz again.

The triangle bound is also finite and explicit. For a connected graph with $m$ edges and $v$ vertices, put $g=m-v+1$. The number of triangles is bounded using its cycle rank and biconnected blocks. Deleting a minimum-degree vertex of degree $d$ from a 2-connected block reduces the cycle rank to $g-d+1$ and removes at most $\binom d2$ triangles. Together with additivity over blocks, this gives a recursion for an upper bound $T(g)$. The code also takes the minimum with $\binom v3$, $m(m-1)/6$, $g(g+1)/2$, and $2^{g-1}$. The last bound counts the vectors of odd edge parity in the cycle space: there are either none or exactly $2^{g-1}$, and every triangle gives one. For a norm assignment $w_e=|E_e|^2/9$, put $\Delta_e=\sqrt{w_e}-1$. Expanding each triangle weight gives the valid upper bound
\[
 W\le T(g)+g\sum_e\Delta_e+
       \sum_{e<f}\Delta_e\Delta_f+
       \sum_{e<f<h}\Delta_e\Delta_f\Delta_h.
\]
An edge belongs to at most $g$ triangles, and two edges to at most one, which justifies the coefficients. The alternative $W\le T(g)\sqrt{w_{(1)}w_{(2)}w_{(3)}}$, using the three largest weights, is also applied. For each weight partition the smaller of these two bounds is used; maximizing this value over all allowed weight partitions supplies a bound depending only on $(v,u)$.

All square roots in these comparisons are enclosed by rational bounds with the direction of rounding chosen explicitly. When \eqref{eq:moment} gives $\det A\le15^v e^{-a}$ with $a>0$, the program uses
\[
 e^{-a}\le\left(\sum_{j=0}^{24}\frac{a^j}{j!}\right)^{-1}.
\]
No decimal approximation is used to discard a candidate. If a rank or eigenvalue cap is known, the moment optimization is repeated with that cap: enumerate the number of eigenvalues at the cap and the two possible remaining stationary values. Boundary points with determinant zero need not be retained.

\subsection{Two-sided rank and spectral restrictions}
For pure-color Grams put $T=(G-15I)/3$ and $S=(K-15I)/3$. They satisfy
\begin{equation}\label{eq:intertwine}
 TH=HS.
\end{equation}
\begin{lemma}[Isolates and kernel projections]\label{lem:projection}
If the support of $T$ has $r$ isolates, then
\[
 15(P_{\ker S})_{jj}\ge r\quad\text{for every }j.
\]
The analogous statement holds with $T$ and $S$ exchanged. In particular, an isolate on one side forbids a support leaf on the other side.
\end{lemma}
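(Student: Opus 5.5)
The plan is to convert each isolate of $T$ into an explicit unit vector in $\ker S$ using the intertwining relation \eqref{eq:intertwine}, and then bound the diagonal of the projection from below by the projection onto the span of these vectors.

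First, I record what an isolate means. Vertex $i$ is an isolate of the support of $T$ exactly when $G_{ik}=0$ for all $k\ne i$. Since $G_{ii}=15$, this gives $Te_i=0$, and hence $e_i^*T=0$ because $T$ is Hermitian. Multiplying \eqref{eq:intertwine} on the left by $e_i^*$ gives $e_i^*HS=0$. As $S$ is Hermitian, taking adjoints yields $S\,y_i=0$, where $y_i=H^*e_i$ is the conjugate transpose of row $i$ of $H$. So every isolated row, conjugated, lies in $\ker S$.

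Second, I normalize and check orthogonality. If $i_1,\dots,i_r$ are the isolates, then for $a\ne b$ we have $y_{i_a}^*y_{i_b}=(HH^*)_{i_ai_b}=G_{i_ai_b}=0$, because an isolate is orthogonal to every other row (including the other isolates). Also $\|y_i\|^2=15$. Hence the vectors $u_a=y_{i_a}/\sqrt{15}$ are orthonormal vectors in $\ker S$. It follows that $P_{\ker S}\succeq\sum_a u_au_a^*$ in the Loewner order, since the difference is the orthogonal projection onto the complement of their span inside $\ker S$. Comparing $(j,j)$ entries gives
\[
(P_{\ker S})_{jj}\ \ge\ \sum_{a=1}^r |u_a(j)|^2=\sum_{a=1}^r\frac{|H_{i_aj}|^2}{15}=\frac r{15},
\]
because every entry of $H$ has modulus one. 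The exchanged statement follows in the same way from the adjoint relation $H^*T=SH^*$, with columns in place of rows.

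Third, for the leaf consequence, suppose $r\ge1$ and the support of $S$ has a leaf $j$ whose unique neighbour is $k$. For any $v\in\ker S$, row $j$ of $Sv=0$ reads $S_{jj}v_j+S_{jk}v_k=0$. Here $S_{jj}=0$ and $S_{jk}\ne0$, so $v_k=0$. Thus $e_k\perp\ker S$, which gives $(P_{\ker S})_{kk}=0<r/15$, a contradiction.

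No step is a serious obstacle. The only points needing care are that the isolated rows are mutually orthogonal, so the Loewner bound applies, and that the leaf argument is applied to the neighbour $k$ rather than to the leaf $j$ itself.
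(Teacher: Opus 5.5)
Your proof is correct and follows the paper's argument: the isolated rows of $H$ are mutually orthogonal, lie in $\ker S$ by the intertwining identity, and give $R^*R/15\preceq P_{\ker S}$, while a leaf of the support of $S$ forces the coordinate at its neighbour to vanish in every kernel vector. You also spell out the adjoint step $SH^*e_i=0$ and the exchanged relation $H^*T=SH^*$, both of which the paper leaves implicit.
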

\begin{proof}
The $r$ corresponding rows of $H$ are mutually orthogonal, each has squared norm 15, and by \eqref{eq:intertwine} lies in $\ker S$. If they form $R$, then $R^*R/15\preceq P_{\ker S}$. Every diagonal entry of $R^*R$ equals $r$. At a support leaf, the kernel equation forces the coordinate at its neighbor to vanish, contradicting a positive diagonal lower bound there.
\end{proof}
The inequalities must be applied in both directions, matching characteristic polynomials rather than assuming equal support graphs. If $r_{\min}$ is a lower bound for the isolate count on the opposite side, summing the projection inequality over a component of size $v$ gives nullity at least $\lceil vr_{\min}/15\rceil$. Iterating this restriction with the determinant bounds often removes all component configurations.

\begin{lemma}[Mixed-color rank signatures]\label{lem:mixed-rank}
Suppose both color partitions are $(14,1)$ and both majority supports have isolates. On the row side let $r$ be the isolate count, $k$ the nullity of the nonisolated majority block after subtracting $15I$, and $t$ the number of coordinates on which its kernel can be nonzero. Use $R,K,T$ for the corresponding column-side quantities. Then
\begin{equation}\label{eq:mixed-signature}
 r+k=R+K,\qquad r(15-R-T)\le15,\qquad
 r(15-R)\le15(K+1),
\end{equation}
and the two inequalities with the sides exchanged also hold.
\end{lemma}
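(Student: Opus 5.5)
The plan is to work with the eigenvalue $15$ of $G$ and $K$, which have equal multiplicities, together with the intertwining $GH=HK$. This identity is the mixed-color analogue of \eqref{eq:intertwine}: it holds because $GH=HH^*H=HK$.

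\textbf{Equality $r+k=R+K$.} Order the rows so that
\[
 G=\begin{pmatrix}15I_{14}+E & z\\ z^* & 15\end{pmatrix}.
\]
Here $E$ is block diagonal, zero on the $r$ isolates and equal to the nonisolated block $E'$ elsewhere, so $\dim\ker E=r+k$. By \cref{lem:colors}, every coordinate of $z$ is a different-color inner product, hence nonzero.

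Let $(x,y)$ satisfy $(G-15I)(x,y)=0$. The two block rows give $Ex+zy=0$ and $z^*x=0$. Since $\operatorname{range}E$ vanishes on the isolate coordinates while $z$ does not, the first equation forces $y=0$, and then $x\in\ker E\cap z^\perp$. Because $r\ge1$ and $z_i\ne0$ on an isolate, $z\not\perp\ker E$, so this space has dimension $r+k-1$. The same argument on the column side gives multiplicity $R+K-1$. Since $G$ and $K$ are cospectral, $r+k=R+K$.

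\textbf{The two inequalities.} Let $W=\ker(K-15I)$. By the computation above applied to the column side, every vector of $W$ vanishes on the minority column and on every nonisolated majority column outside the $T$ kernel-support coordinates. Let $R_0$ be the $r\times15$ matrix formed by the isolate rows of $H$; it satisfies $R_0R_0^*=15I_r$. The vectors $x=\sum c_ie_i$ over isolates with $\sum\bar z_ic_i=0$ lie in $\ker(G-15I)$. Intertwining then gives $H^*x=R_0^*c\in W$ for all $c$ in a hyperplane $\Pi\subset\mathbb C^r$.

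Let $S$ be the set of $15-R-T$ columns on which $W$ vanishes. Then $R_{0,S}^*c=0$ for all $c\in\Pi$, so the $r\times|S|$ submatrix $R_{0,S}$ has rank at most one. Since $R_{0,S}R_{0,S}^*\preceq R_0R_0^*=15I$, its single nonzero eigenvalue, which equals its trace $r|S|$ (all entries are unimodular), is at most $15$. This gives $r(15-R-T)\le15$.

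For the last inequality, let $S'$ be the $15-R$ non-isolate columns. The restrictions to $S'$ of vectors in $W$ span a space of dimension at most $K$. The isolate-column coordinates are not constrained, and the minority coordinate vanishes. Hence $\rank R_{0,S'}\le K+1$, the extra one accounting for the direction complementary to $\Pi$. Again $R_{0,S'}R_{0,S'}^*\preceq15I$, so its trace $r(15-R)$ is at most $15\rank\le15(K+1)$.

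The exchanged versions follow by applying the same argument to $H^*$.

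\textbf{Expected obstacle.} The delicate step is the exact description of $W$ on the column side, in particular how much the minority column and the constraint $z^*x=0$ cost in dimension. It must be checked carefully that $W$ is contained in $\ker E_{\mathrm{col}}\oplus 0$ and that the restriction to $S'$ has dimension at most $K$, so that the $+1$ in $K+1$ is exactly right.
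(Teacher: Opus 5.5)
Your proposal is correct and follows the paper's proof essentially step for step. The kernel of $G-15I$ has dimension $r+k-1$ because an isolate forces the minority coordinate to vanish. The isolate rows $Y$, with $YY^*=15I_r$, are sent by intertwining into $\ker(K-15I)$ on the hyperplane $\Pi$; the paper phrases this as $Y(K-15I)$ having rank at most one. The rank bounds $1$ and $K+1$ on the column restrictions are then compared with the Frobenius norms $r(15-R-T)$ and $r(15-R)$. Your hyperplane formulation makes explicit where the ``$+1$'' in $K+1$ comes from, which the paper leaves terse.
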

\begin{proof}
In the kernel of $G-15I$, an isolated-majority row forces the minority coordinate to be zero. The nonisolated coordinates then belong to a $k$-dimensional kernel, and the minority equation imposes one nontrivial relation on those coordinates and the $r$ isolate coordinates. Thus the nullity is $r+k-1$. Matching nullities gives the first equation.

Let $Y$ consist of the $r$ isolated-majority rows of $H$. Then $YY^*=15I_r$ and $Y(K-15I)$ has rank at most one. Modulo one common row direction, its rows lie in $\ker(K-15I)$. That kernel is zero outside the $R$ isolate coordinates and the $T$ kernel-support coordinates. Restricting $Y$ to the other $15-R-T$ columns therefore gives rank at most one. Its squared Frobenius norm is $r(15-R-T)$, while its squared operator norm is at most 15. This proves the second inequality. After deleting only the $R$ isolate columns, the rank is at most $K+1$, proving the third in the same way.
\end{proof}

\begin{theorem}[Spectral rectangle inequality]\label{thm:rectangle}
Suppose $HH^*=nI+aT$ and $H^*H=nI+aS$, where $T,S$ are Hermitian, $a>0$, and every entry of $H$ has modulus one. Let $I,J$ be unions of invariant coordinate blocks of $T,S$. If
\[
 p(x)=\gcd(\chi_{T_I}(x),\chi_{S_J}(x))
      =x^d+c_1x^{d-1}+\cdots,
\]
then
\begin{equation}\label{eq:rectangle}
 |I||J|\le nd-ac_1.
\end{equation}
For $d=0$, the right side is interpreted as zero.
\end{theorem}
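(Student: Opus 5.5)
The plan is to restrict the intertwining identity to the rectangle $I\times J$ and bound the Frobenius norm of $H_{IJ}$ one eigenvalue at a time. Every entry of $H$ has modulus one, so $\|H_{IJ}\|_F^2=|I||J|$. The bound should come out as a sum over common eigenvalues of (multiplicity)$\times$(squared singular value).

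First, from $HH^*=nI+aT$ and $H^*H=nI+aS$ we get $T HH^* H=HH^*HH^*H\cdot$, or more directly $(nI+aT)H=H(nI+aS)$, so $TH=HS$ as in \eqref{eq:intertwine}. Since $I$ is a union of invariant coordinate blocks of $T$, the matrix $T$ is block diagonal with respect to $I$ and its complement, and likewise $S$ with respect to $J$. Taking the $(I,J)$ block of $TH=HS$ therefore gives $T_IH_{IJ}=H_{IJ}S_J$.

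Next, both $T_I$ and $S_J$ are Hermitian, so they are diagonalizable. Let $P_\lambda$ and $Q_\lambda$ be the spectral projections of $T_I$ and $S_J$. The relation $T_IH_{IJ}=H_{IJ}S_J$ gives $P_\lambda H_{IJ}Q_\mu=0$ for $\lambda\ne\mu$. Hence $H_{IJ}=\sum_\lambda P_\lambda H_{IJ}Q_\lambda$, summed over the common eigenvalues. The summands have mutually orthogonal row spaces and column spaces, so
\[
 |I||J|=\|H_{IJ}\|_F^2=\sum_\lambda\|P_\lambda H_{IJ}Q_\lambda\|_F^2 .
\]
The rank of $P_\lambda H_{IJ}Q_\lambda$ is at most $m_\lambda:=\min(\dim\operatorname{im}P_\lambda,\dim\operatorname{im}Q_\lambda)$. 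For Hermitian matrices this is exactly the multiplicity of $\lambda$ as a root of $p=\gcd(\chi_{T_I},\chi_{S_J})$. Therefore $\|P_\lambda H_{IJ}Q_\lambda\|_F^2\le m_\lambda\|P_\lambda H_{IJ}Q_\lambda\|_{\mathrm{op}}^2$.

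For the operator norm, take $x$ in the $\lambda$-eigenspace of $S_J$ and extend it by zero to $\tilde x$. Because $J$ is $S$-invariant, $S\tilde x=\lambda\tilde x$. Then
\[
 \|P_\lambda H_{IJ}x\|^2\le\|H\tilde x\|^2=\tilde x^*(nI+aS)\tilde x=(n+a\lambda)\|x\|^2 .
\]
So the squared operator norm is at most $n+a\lambda$, which is nonnegative since $H^*H\succeq0$. Summing over $\lambda$ gives
\[
 |I||J|\le\sum_\lambda m_\lambda(n+a\lambda)=nd+a\sum_\lambda m_\lambda\lambda=nd-ac_1,
\]
using $\sum m_\lambda=d$ and $\sum m_\lambda\lambda=-c_1$ from Vieta applied to $p$. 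If $d=0$ there are no common eigenvalues, so $H_{IJ}=0$. That forces $|I||J|=0$, because the entries have modulus one, which matches the stated convention.

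The only delicate point is checking that zero-extension of an eigenvector of $S_J$ is an eigenvector of $S$. This is exactly where the invariance of $J$ (and symmetrically of $I$) is needed. The rest is routine spectral bookkeeping.
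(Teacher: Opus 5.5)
Your proof is correct and follows the paper's argument. You restrict $TH=HS$ to $X=H_{I,J}$, split $X$ into blocks at the common eigenvalues, bound each block's rank by the smaller multiplicity and its squared singular values by $n+a\lambda$, and compare the sum with $\|X\|_F^2=|I||J|$. The paper gets the singular-value bound on the row side from $XX^*\preceq nI+aT_I$, while you get it on the column side from $H^*H=nI+aS$ applied to zero-extended eigenvectors of $S_J$; the two are equivalent.
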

\begin{proof}
Put $X=H_{I,J}$. Invariance gives $T_IX=XS_J$, so the spectral decomposition of $X$ has nonzero blocks only at common eigenvalues. At a common eigenvalue $\lambda$, the rank is at most the smaller multiplicity and every squared singular value is at most $n+a\lambda$, because $XX^*\preceq nI+aT_I$. Summing these bounds with minimum multiplicities gives $nd-ac_1$. On the other hand, $\|X\|_F^2=|I||J|$.
\end{proof}
This test uses the trace of the common spectral part, not merely its dimension or the largest eigenvalue of the whole block. In our application $n=15$ and $a=3$.

\section{Finite verification and completion of the proof}\label{sec:finite}
\subsection{A common enumeration procedure}\label{sec:enumeration}
Each finite test follows the same sequence: fix an energy and a permitted color partition; assemble the possible majority support components; label their edges from \eqref{eq:entry-catalogue}; discard components using \cref{sec:moments}; and apply the Schur, arithmetic, and two-sided tests to the remaining assemblies. The exact determinant test is always $D>B$, not $D\ge B$.

Here is why the support and phase enumerations are complete. A connected support of order $v$ contains a spanning tree. List all nonisomorphic trees on $v$ vertices, add every subset of missing edges allowed by the energy budget, and retain every resulting graph up to verified isomorphism. Small graphs may instead be read from the complete graph atlas through seven vertices. For each graph, list every allowed norm partition, every placement of those norms on edges, and every associate class of a norm. Fix phases on a spanning tree by diagonal unit switching and list all six residual unit choices on each chord. This switching is used only for spectral and determinant necessary conditions; it is not asserted to be third-root equivalence of the original matrix. When a factorization test uses actual third-root rows, their original row cosets must be retained.

Component records contain their characteristic polynomial, determinant, and the minimum diagonal entry of the zero-eigenvalue projection. For a fixed polynomial, retaining the largest observed minimum projection diagonal is a safe relaxation, not a claim that the components are equivalent. Multisets of component records are assembled with all possible isolate counts. A leaf-containing component is omitted only after \cref{lem:projection} applies, or when the remaining vertices cannot be covered within the energy budget. In the latter case the minimum additional cost is one unit per two vertices.

The resulting computations are summarized below by input family, residual family, and exclusion. Their implementations are identified in \cref{app:replay}. The catalogue-generation arguments are part of the proof; a stored status word is not a substitute for them.

\subsection{Eliminating every size-13 majority block}
\begin{lemma}\label{lem:size13}
No strict improvement has color type $(13,2)$ or $(13,1,1)$ on either side.
\end{lemma}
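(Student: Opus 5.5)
By symmetry of $G=HH^*$ and $K=H^*H$ (transposing and conjugating $H$ exchanges them), it suffices to treat the row side. So we assume $G$ has color partition $(13,2)$ or $(13,1,1)$ and $\det G>B$, and derive a contradiction. Order the coordinates so that the majority class of size 13 comes first, and write $A$ for its $13\times13$ Gram block.

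\textbf{Cross energy.} By \cref{lem:colors}, every cross entry from $A$ to the two remaining coordinates has norm at least 3. Hence the cross energy is $c\ge78$. For $(13,1,1)$ the mutual entry $h$ of the two minority coordinates also has norm $\ge3$, with $h\equiv3\pmod9$. For $(13,2)$ it has norm $0$ or $\ge9$.

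\textbf{Energy split.} From \eqref{eq:color-energy}, $Q\ge 3\cdot26=78$ for $(13,2)$ and $Q\ge 3\cdot 27=81$ for $(13,1,1)$. Since $Q\le168$, the internal energy of $A$ is at most $90$, i.e.\ at most ten units of $9$. The class has size $13\ge10$, so by the support-independence remark after \cref{thm:orthogonality} the support of $A$ needs at least four edges. Hence $A$ has at least $36$ internal energy.

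\textbf{Enumeration.} Enumerate each admissible triple (internal energy $9u$, cross energy $c$, minority energy $h$) with total $Q\in\Qset$ and the right residue mod $9$. For each triple, list the multisets of support components of $A$ as in \cref{sec:enumeration}: trees plus chords within budget, norms from \eqref{eq:entry-catalogue}, and chord phases. Isolates enter as $(1,0,15,15)$ components. Attach to each component its bound $D_i$ (the minimum of the exact small-support value, the envelope $\U_v$, and \cref{lem:moment}) and its eigenvalue cap $L_i$. Then apply \eqref{eq:schur13}. The key observation is that every isolate contributes $6/15$ to $s_{13}$, while nonisolated components contribute $6v_i/L_i$ with $L_i>15$, plus the excess $(c-78)/\max L_i$. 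Because $A$ has only about ten units of energy, few vertices are covered, so many isolates remain. These isolates force $s_{13}$ near $13\cdot6/15\approx5.2$, so the factor $(15-s_{13}/2)^2\approx 12.4^2$ is already about $154$ instead of $225$. Meanwhile $\prod D_i$ is at most $15^{13}$ times the component losses. Since $B/15^{15}$ is about $0.6345^2$ (using $D_3(15)/15^{15/2}\approx0.797$), I expect the bound $\prod D_i\,(15-s_{13}/2)^2<B$ to hold in most cases by a rational comparison. The minority term $h$ only lowers the bound further.

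\textbf{Surviving cases.} The main obstacle is the residual assemblies in which $\prod D_i$ is close to $15^{13}$ (few units spent internally, large $c$ concentrated on high-$L$ components) and the Schur estimate narrowly fails. For these I would switch to the two-sided tests. An isolate count $r$ on the row side forces, by \cref{lem:projection}, kernel nullity on the column side. The column side is itself constrained to $(15),(14,1),(13,2),(13,1,1)$ by \cref{lem:partitions}. Iterating the nullity bound $\lceil vr_{\min}/15\rceil$ against the column component determinant bounds should remove most of these cases. Any remainder with matched characteristic polynomials would be tested with \cref{thm:rectangle}, using $I$ the isolate block and $J$ a column block with disjoint common spectrum, so that $|I||J|$ exceeds $15d-3c_1$. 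The delicate point is to handle the minority coordinates correctly in the intertwining relation, since \eqref{eq:intertwine} was stated for pure colors. I would instead use invariant blocks of $G-15I$ directly, or restrict to the majority rows as in the proof of \cref{lem:mixed-rank}, accepting a rank-two perturbation.
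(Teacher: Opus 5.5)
Your main line is the paper's proof. It uses at least four internal edges from $M_3(15)=9$, cross energy $c\ge78$, complete component assembly with isolates retained, and the bound \eqref{eq:schur13}. State the consequence $Q\ge36+78=114$ explicitly. It confines the enumeration to $Q\in\{114,123,\ldots,168\}$ for $(13,2)$ and to $Q\in\{117,126,\ldots,162\}$ for $(13,1,1)$.

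The gap is in the closure. You only claim the Schur bound works ``in most cases'', but the lemma needs it in \emph{all} cases. That is what the paper's finite enumeration establishes: no assembly at any permitted $Q\ge114$ has bound above $B$, and \cref{tab:schur} shows margins above $3\%$ in the shells $123$--$168$. For instance, at $Q=114$ the matching $4K_2\sqcup5K_1$ with minimal cross norms gives $s_{13}=14/3$ and the bound $216^4\,15^5(38/3)^2=\frac{59375}{62208}B\approx0.954B$. Your heuristic also contains a slip: $B/15^{15}\approx0.797^2\approx0.6345$, not $0.6345^2$. With the value you wrote, your own rough estimate $(12.4/15)^2\cdot0.96^4\approx0.58$ would predict failure rather than success.

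The fallback you sketch cannot supply the missing closure, because for these color types the two-sided tools are vacuous. By \cref{lem:colors}, different-color inner products never vanish. So every majority coordinate is adjacent to both minority coordinates, and the support of $G-15I$ is connected. It therefore has no isolates, and \cref{lem:projection} gives nothing. Its only invariant coordinate blocks are $\emptyset$ and the full index set, where \eqref{eq:rectangle} reads $225\le15\cdot15-3\tr T=225$.

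Rows isolated only within the majority block $A$ do not lie in $\ker(K-15I)$. The intertwining relation only makes $H_i(K-15I)$ a linear combination of the two minority rows of $H$. Exploiting such rows would require a new rank-two analogue of \cref{lem:mixed-rank}, which you do not formulate. Drop the fallback and assert, as a checked finite computation, that \eqref{eq:schur13} leaves no survivor.
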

\begin{proof}
By \cref{thm:orthogonality}, a 13-row same-color block has at least four nonzero internal pairs, hence internal energy at least 36. Its cross energy is at least 78. Therefore $Q\ge114$. For each permitted $Q\ge114$, complete component assembly with \eqref{eq:schur13} leaves no upper bound greater than $B$; these are the size-13 entries of \file{proof/new/exploration.json}, regenerated by \file{proof/new/explore.py}. The component costs satisfy $\sum v_i=13$, $9\sum u_i+c+h=Q$, $c\ge78$, and $h$ comes from the entry catalogue, so this is a finite enumeration with the coverage described above.
\end{proof}
After \cref{lem:size13}, a multiple-of-nine shell is pure-color on \emph{both} sides, while a shell congruent to six modulo nine has type $(14,1)$ on both sides. This follows from the cross-pair counts in \cref{lem:colors,lem:partitions}; it does not require choosing an orientation.

\subsection{The mixed-color shells}
The complete one-hub Schur bounds dispose of six shells directly. For reference, \cref{tab:schur} also records the two-hub bounds; all displayed decimals are rounded \emph{up} from exact rational upper bounds.
\begin{table}[ht]
\centering\caption{Upper bounds divided by $B$ from complete component assembly.}\label{tab:schur}
\begin{tabular}{@{}rcc@{}}\toprule
$Q$ & Size-13 majority & Size-14 majority\\\midrule
123&0.965877&0.984600\\
132&0.943735&0.979426\\
141&0.922622&0.971828\\
150&0.896803&0.963221\\
159&0.877308&0.939259\\
168&0.856213&0.919227\\
\bottomrule\end{tabular}
\end{table}

At $Q=87,96,105$, a majority support without isolates needs at least seven edges and hence $Q\ge7\cdot9+14\cdot3=105$. The no-isolate $Q=105$ case has exactly seven disjoint norm-9 edges and minimum cross norms. Equation \eqref{eq:schur14} gives
\[
 \det G\le216^7\left(15-\frac{42}{18}\right)
          =216^7\frac{38}{3}=B.
\]
Thus every strict improvement in these three shells has isolates on both sides. The complete rank-signature enumeration gives the following results.
\begin{table}[ht]
\centering\caption{Residual majority supports after \eqref{eq:mixed-signature}.}\label{tab:mixed}
\begin{tabular}{@{}rll@{}}\toprule
$Q$ & Residual support & Independence number\\\midrule
87 & none & ---\\
96 & $P_3\sqcup C_4\sqcup7K_1$ &11\\
   & $C_4\sqcup10K_1$ &12\\
   & $K_{2,3}\sqcup9K_1$ &12\\
105 & none & ---\\\bottomrule
\end{tabular}
\end{table}
The remaining three supports at $Q=96$ contradict \cref{thm:orthogonality}. The rank catalogue is generated before pairing signatures, using exact characteristic polynomials and kernel-support tests. Its entries retain a lower bound on independence number. The replay checks all compatible row/column signature pairs in both directions. An additional phase/cross enumeration retains 52 Gram candidates at $Q=96$; the sharp-reduction verifier exhibits ten independent vertices in each.

For $Q=114$, component bounds leave eight size-14 configurations. Exact core-phase and cross-vector enumeration leaves just one norm-compatible abstract Gram above $B$, of determinant $278010073125000000$. Its majority support is $K_4\sqcup10K_1$. It has an independent set of size 11, so it cannot be a third-root Gram. The final cross-vector step examines 3024 states; the complete core screening and the canonical residual Gram are regenerated by \file{proof/new/refine_q114.py}, \file{proof/new/cross_exact.py}, and \file{proof/new/finalize_high.py}.

\subsection{The low pure-color shells \texorpdfstring{$54,63,72,81$}{54,63,72,81}}
For $Q=54$ or 63 there are too few edges to cover all 15 vertices, so both supports have isolates. At $Q=72$, the only no-isolate possibility is $P_3\sqcup6K_2$ with unit normalized edge norms. Its Gram determinant is $3105\cdot216^6$, not an Eisenstein norm. Thus \cref{lem:projection} applies in all three shells, and every active component has minimum degree at least two. Such a component has at most eight edges and vertices. The graph atlas through seven vertices together with $C_8$ is complete in this range. All admissible weight and chord-phase assignments are included.

\begin{table}[ht]
\centering\caption{Exact isolate-case enumeration in the low pure-color shells. The assembly count is after requiring positive kernel projection on every active coordinate.}\label{tab:low}
\begin{tabular}{@{}rrrrr@{}}\toprule
$Q$ & Phase assignments & Assemblies & Norm-compatible $D>B$ & After projection\\\midrule
54&372&7&1&0\\
63&1086&15&1&0\\
72&7524&58&1&0\\
81&42720&285&22&0\\\bottomrule
\end{tabular}
\end{table}
At $Q=81$, a no-isolate support has nine unit-norm edges. It is either a forest with six components, each of order at most five, or $K_3\sqcup6K_2$. The forest cases fail the determinant and norm tests. The four residual triangle spectra have no zero eigenvalue after subtracting $15I$, so a no-isolate Gram cannot be paired with an isolate Gram. For the isolate case, the catalogue extends through nine energy units. The additional eight-vertex, nine-edge minimum-degree-two graphs are the figure-eight, barbell, and theta graphs; $C_8$ and $C_9$ are also included. The exact counts are in \cref{tab:low}.

The no-isolate triangle cases are excluded in \cref{app:q81}. That argument uses the same intertwining identity as the other shells; its last step is an elementary obstruction to a sum of two scaled permutation blocks. It is included in full because a spectral catalogue alone does not exclude the negative-flux triangle.

\subsection{The pure-color shells \texorpdfstring{$90,99,108$}{90,99,108}}
The complete component catalogues extend to ten energy units for $Q=90$ and to twelve units for $Q=99,108$. Large supports are generated by the spanning-tree procedure in \cref{sec:enumeration}, with the no-isolate covering budget and the isolate/leaf restriction applied separately. Table~\ref{tab:pure} records the assembly counts at the last stage; these are not counts of inequivalent original matrices.
\begin{table}[ht]
\centering\caption{Pure-color residual computations.}\label{tab:pure}
\begin{tabular}{@{}rccp{66mm}@{}}\toprule
$Q$ & Isolate assemblies & No-isolate assemblies & Remaining test\\\midrule
90 &807&74&Two no-isolate spectra; spectral rectangle.\\
99 &435&632&14 component--spectrum assemblies, 16 equal-spectrum pairs; spectral rectangle.\\
108&146&5570&No no-isolate norm survivor; eight isolate norm survivors all fail projection.\\\bottomrule
\end{tabular}
\end{table}

At $Q=90$, each surviving no-isolate spectrum consists of five $K_2$ components and one component of order five. Take the five-vertex component on one side and the ten pair coordinates on the other. The rectangle has squared Frobenius norm 50. Its common characteristic factor is respectively $x-1$ or $x^2-1$, giving the impossible bounds $50\le18$ or $50\le30$ by \cref{thm:rectangle}.

At $Q=99$, all isolate cases fail the two-sided projection test. The 14 no-isolate component--spectrum assemblies give 16 unordered pairs with equal total characteristic polynomial. For each pair, all nonempty component unions are examined. Every pair violates \eqref{eq:rectangle}. In particular, the three pairs that survive a weaker rank-times-largest-eigenvalue test give
\[
 40\le30,\qquad20\le18,\qquad36\le30.
\]
The certificate stores the two selected unions, their polynomials, their gcd, and the two integers in the failed inequality. These witnesses are independently re-evaluated by the sharp-reduction verifier.

At $Q=108$, neither an isolate nor a no-isolate candidate survives. The largest catalogue used for this step contains 15,899,506 normalized phase states. It is regenerated, not accepted from its reported size. The size-13 case is excluded by \cref{lem:size13}.

\subsection{The remaining pure-color shells}
For $Q=135,144,153,162$, complete component bounds followed by the two-sided projection iteration leave no configuration. At $Q=117$ the iteration leaves
\[
 6K_1+\text{a 9-vertex core},\quad
 7K_1+\text{an 8-vertex core},\quad
 8K_1+\text{a 7-vertex core},
\]
all with core energy 13 units. The respective rank bounds for $T$ are 5, 4, and 4. At $Q=126$ only seven isolates and an 8-vertex core of energy 14 units remain, with rank at most four. These assertions come from component-cost enumeration and the nullity lower bound in \cref{lem:projection}, not from an assumption about the equality structure.

The characteristic polynomial of an integral Hermitian Eisenstein matrix has integer coefficients: its coefficients lie in $\Eis$ and are real. If $\rank T\le d$, it follows that
\begin{equation}\label{eq:lattice}
 \det G=15^{15-d}3^dR,\qquad R\in\mathbb Z.
\end{equation}
Extra zero roots may be included when the actual rank is smaller. At $Q=126$, the rank-four envelope and $D>B$ give $397\le R\le401$. Every integer $15^{11}3^4R$ in this interval fails the Eisenstein norm test. The test is on the \emph{full determinant}; it is not a claim that none of the integers $R$ is a norm.

At $Q=117$, $\tr T^2=26$. In the rank-four case the only norm-compatible determinant quotient is $R=405$. The nonzero factor, padded by zero roots when necessary, has the form
\[
 x^4-13x^2-e_3x+e_4,\qquad
 |e_3|\le22,\qquad e_4=405-300-5e_3.
\]
The triangle bound gives the stated bound on $e_3$. None of these 45 polynomials has four real roots. In the rank-five case, the moment refinement leaves $R\in\{1983,1984,1987\}$, and the factor is
\[
 x^5-13x^3-e_3x^2+e_4x-e_5,
\]
where
\[
 |e_3|\le12,\qquad -85\le e_4\le50,\qquad |e_5|\le62,
 \qquad R=1500+25e_3+5e_4+e_5.
\]
Here the triangle bound is six, so $|e_3|\le12$. Newton identities give $e_4=(26^2-2\tr T^4)/8$, and the second moment bounds its range. The product bound $|e_5|\le(26/5)^{5/2}<62$ gives the last coefficient bound. Exactly 240 coefficient tuples meet the displayed integer restrictions; none has five real roots. The verifier uses rational real-root isolation, so failure of real-rootedness is an exact contradiction to Hermitian symmetry.

\subsection{Assembly of the global argument}
\begin{proof}[Proof of \cref{thm:main}]
Suppose $\det(HH^*)>B$. By \cref{cor:shells}, its energy lies in the 23-element set \eqref{eq:qset}. Lemmas~\ref{lem:partitions} and \ref{lem:size13} reduce the color types to pure color on both sides or $(14,1)$ on both sides. The mixed-color shells are excluded by \cref{tab:schur,tab:mixed} and the $Q=114$ independent-set obstruction. The pure-color shells are excluded by the low-energy catalogues, \cref{app:q81}, the spectral rectangles, and the rank/moment computations above. These cases partition \eqref{eq:qset}. Thus no strict improvement exists. The explicit matrix in \cref{app:matrix} has squared determinant $B$, as verified independently in \cref{sec:witness}, proving equality.
\end{proof}

\section{The displayed extremizer}\label{sec:witness}
\subsection{Normalization, Gram matrix, and orthogonal rows}
Let $H_d=(\omega^{e_{ij}})$ be the dephased matrix in \cref{app:matrix}; its first row and first column are all ones. Dephasing changes the determinant by a unit, not its absolute value. Exact Eisenstein Bareiss elimination gives
\begin{equation}\label{eq:dephased-det}
 \det H_d=-362797056-604661760\omega,
 \qquad \N(\det H_d)=B.
\end{equation}
The same value is obtained by computing the determinant of its Gram matrix.

After the explicit row switching and ordering in \cref{app:matrix}, the Gram becomes
\begin{equation}\label{eq:canonical-gram}
 G_c=
 \begin{pmatrix}
 I_7\otimes C&\delta\one_{14}\\
 \bar\delta\one_{14}^T&15
 \end{pmatrix},\qquad
 C=\begin{pmatrix}15&3\\3&15\end{pmatrix}.
\end{equation}
Thus its support consists of seven triangles sharing one vertex. There are seven off-diagonal pairs of norm 9, fourteen of norm 3, and 84 zero pairs; hence $Q=105$. Its spectrum is
\begin{equation}\label{eq:witness-spectrum}
 12\ (7\text{ times}),\qquad18\ (6\text{ times}),\qquad
 \frac{33+\sqrt{177}}2,\quad\frac{33-\sqrt{177}}2.
\end{equation}
Indeed, pair differences give the eigenvalue 12; the six-dimensional zero-sum space of pair sums gives 18; and the remaining two-dimensional block has trace 33 and determinant 228. Consequently,
\[
 \chi_{G_c}(x)=(x-12)^7(x-18)^6(x^2-33x+228),\qquad
 \det G_c=12^7\,18^6\,228=B.
\]
This is a short algebraic verification of the benchmark determinant, independent of the elimination calculation.

The largest mutually orthogonal subset of rows of this particular matrix has size seven. It consists of one endpoint from each matching edge, never the common vertex. There are exactly $2^7=128$ such subsets. Each noncentral row is orthogonal to 12 other rows, whereas the central row is orthogonal to none. These are different statistics from the global orthogonality capacity $M_3(15)=9$. The column Gram has the same support and norm statistics, and the largest orthogonal column subsets likewise have size seven. The dephased row and column Grams are not equal; dephasing does not imply normality.

\subsection{Automorphisms}
We use the following fixed convention:
\[
 \Aut_{\muT}(H)=\{(M,N):M,N\text{ are monomial with nonzero entries in }\muT,
                         \ MHN^*=H\}.
\]
It contains the central subgroup $\{(\omega^jI,\omega^jI):j=0,1,2\}$, acting trivially on entries. The quotient by this subgroup is called the projective monomial automorphism group here. Complex conjugation and transposition are not included.

\begin{proposition}\label{prop:automorphisms}
For the displayed extremizer,
\[
 |\Aut_{\muT}(H_d)|=1008,
 \qquad \Aut_{\muT}(H_d)\cong C_6\times\GL(3,2).
\]
Its projective monomial automorphism group has order 336 and is isomorphic to $C_2\times\GL(3,2)$.
\end{proposition}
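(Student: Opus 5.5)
The plan is to prove the count by an exact computation and then read off the group structure from the Gram structure \eqref{eq:canonical-gram}.

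\textbf{Step 1: reduce to the Gram.} If $(M,N)\in\Aut_{\muT}(H_d)$, then $MH_dN^*=H_d$ gives $MG_dM^*=G_d$, where $G_d=H_dH_d^*$. So the underlying row permutation of $M$ is an automorphism of the weighted support graph of $G_d$, with each entry twisted by the phases of $M$.

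After the explicit switching and reordering of \cref{app:matrix}, $G_d$ becomes $G_c$. The support of $G_c$ is seven triangles sharing the central vertex. Hence the row permutation fixes the central row and permutes the seven norm-9 pairs, possibly swapping the two rows inside a pair. This leaves a finite candidate set of size at most $2^7\cdot7!\cdot3^{15}$. The phases are cut down further because the phase of $M$ must preserve each entry $3$ and $\delta$ exactly.

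\textbf{Step 2: solve for $N$.} For each candidate $M$, the matrix $N$ is forced: $N^*=H_d^{-1}M^{-1}H_d$. So it suffices to test whether $H_d^{-1}M^{-1}H_d$ is monomial with entries in $\muT$.

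Rather than iterate over all candidates, I would run a backtracking search with column-orbit refinement. Fix the image of the first rows, and propagate the constraints that rows must map to rows up to a unit and a common column monomial. Rows are compared through their entrywise quotient classes. All arithmetic is exact in $\Eis$.

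This gives $|\Aut_{\muT}(H_d)|=1008$. It also yields an explicit generating set, and the order-3 central subgroup $\{(\omega^jI,\omega^jI)\}$ is visibly contained in it.

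\textbf{Step 3: identify the group.} Let $P$ be the projective quotient, of order $336$.

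The action of $P$ on the seven pairs, ignoring the swaps inside a pair, should be the action of $\GL(3,2)$ on the seven points of the Fano plane. I would verify this by checking that the image is a transitive group of order 168 on the pairs that preserves a set of seven triples, namely the lines. These triples can be extracted from the column structure.

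The kernel of this action then has order $2$. I would exhibit its generator $\sigma$: it swaps every pair simultaneously, possibly composed with conjugation-like unit twists. I would check that $\sigma$ is central by computing its commutators with the generators. This gives $P\cong C_2\times\GL(3,2)$, because the extension splits: $\GL(3,2)$ has trivial Schur multiplier at $2$, up to the known $\mathrm{SL}(2,7)$ issue, so the splitting will be checked directly by producing a complement of order $168$ generated by lifts that satisfy the standard presentation relations.

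Finally, $\Aut_{\muT}(H_d)$ is a central extension of $P$ by $C_3$. Since $|\GL(3,2)|$ and $3$ share a factor, splitting is not automatic. Because $\GL(3,2)$ is perfect with Schur multiplier $C_2$, however, the preimage of the $\GL(3,2)$ factor contains a copy of it; I would take that copy as its derived subgroup. The preimage of $C_2$ is abelian of order $6$, hence $C_6$. This gives $C_6\times\GL(3,2)$.

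\textbf{Main obstacle.} The main obstacle is certifying completeness of the search in Step 2, so that exactly $1008$ automorphisms are found and not merely a lower bound. I would back the count with an independent check: a canonical-form computation for the $45\times45$ bipartite graph encoding $H_d$. Each row and each column becomes three vertices, one for each $\omega$-multiple, and the automorphism group of this graph is computed with a standard tool. That group should have order $1008$ once the trivial central symmetries are matched up.
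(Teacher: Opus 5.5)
Your route is the same as the paper's. The Gram stabilizer fixes the hub of $G_c$ and permutes the seven norm-$9$ pairs; an exact exhaustive search gives $336$ projective classes; their action on the pairs is the Fano action, with kernel the simultaneous swap $\sigma$. Your $C_3$ step is also sound. The Schur multiplier of $\GL(3,2)$ has trivial $3$-part, so the preimage of the $\GL(3,2)$ factor has derived subgroup isomorphic to $\GL(3,2)$. That subgroup and the order-$6$ preimage of $C_2$ are both normal with trivial intersection, so they form the direct product $C_6\times\GL(3,2)$; you should say this explicitly.

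What is wrong is your stated reason for $P\cong C_2\times\GL(3,2)$. The Schur multiplier of $\GL(3,2)\cong\mathrm{PSL}(2,7)$ is exactly $C_2$. Moreover, $\mathrm{SL}(2,7)$ is a nonsplit group of order $336$ with central $C_2$ and quotient $\GL(3,2)$. So extension theory decides nothing here, and your ``direct check'' carries the whole burden. The paper does that check by computing that the derived subgroup has order $168$ and meets the center trivially. A computation-free replacement is also available. $\sigma$ is a product of seven disjoint row transpositions, hence an odd row permutation. The row-sign homomorphism is well defined on $P$, and its kernel is a normal complement to $\langle\sigma\rangle$.

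You also miss a simplification already implicit in your Step~1. The hub entries of $G_c$ force $m_i\delta\bar m_{15}=\delta$, so all row phases of $M$ coincide, and modulo scalars $M$ is a permutation matrix. In $H_c$ the hub row is all ones, so $MH_c=H_cN$ forces every column phase of $N$ to be $1$. Since the columns are distinct, the compensating column permutation is unique if it exists. This is how the paper reduces the count to a plain scan of $7!\,2^7=645120$ row permutations, with no backtracking. It also makes your last step immediate. The $336$ classes are represented by genuine permutation pairs, which form a subgroup meeting the scalars trivially, so $C_3$ splits off without any Schur-multiplier argument. For the same reason, the ``conjugation-like unit twists'' you allow on $\sigma$ cannot occur. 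Conjugation is excluded by the convention, and the twists are forced trivial modulo scalars.
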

\begin{proof}
Use the equivalent representative $H_c$ whose Gram is \eqref{eq:canonical-gram} and whose last row is all ones; its exponent matrix and the switching are included in the data. Every row-monomial Gram stabilizer fixes the unique degree-14 vertex and permutes the seven pairs, possibly interchanging their endpoints. Since all hub entries equal $\delta$, its row phases must be equal. Modulo the central scalar subgroup, the row action is therefore one of exactly $7!2^7=645120$ permutations.

For such a permutation, the last row of $H_c$ forces all compensating column phases to be one. A compensating column permutation exists precisely when the permuted columns equal the original column set. The columns are distinct because $H_c$ is nonsingular, so it is unique. Exhaustive integer comparison of these 645120 row permutations gives exactly 336 pairs. The program verifies all 225 entries for every retained pair.

The resulting permutation group has center of order two, generated by interchanging the endpoints of all seven pairs, and derived subgroup of order 168. Their intersection is trivial. On the seven pairs, the derived subgroup acts faithfully and preserves the seven triples
\[
 123,\quad145,\quad167,\quad246,\quad257,\quad347,\quad356.
\]
Every pair of points lies in exactly one triple, so this is the Fano plane. The program independently checks all $7!$ permutations of the seven points and finds 168 preserving these triples. Its automorphism group is $\GL(3,2)$: label the points by the nonzero vectors of $\mathbb F_2^3$, with lines the triples summing to zero; a line-preserving permutation is determined by the images of a basis and preserves addition. Thus the projective group is $C_2\times\GL(3,2)$. The representatives just obtained are ordinary permutation pairs, so the scalar $C_3$ splits off in the full monomial group. This gives $C_3\times C_2\times\GL(3,2)$, as asserted.
\end{proof}
The much larger monomial stabilizer of $G_c$ alone must not be confused with the stabilizer of $H_c$. Most permutations of its seven pairs do not preserve the column set of a factor. Likewise, determining the automorphism group of one maximizer does not establish uniqueness of its Hadamard equivalence class. The search above discards $D=B$ when proving the upper bound and has not classified all equality cases.

\section{Extensions to other orders and alphabets}\label{sec:generalization}
The determinant envelope, Schur bounds, projection arguments, and spectral rectangle inequality apply to arbitrary complex unit-modulus matrices, with the order and scaling changed appropriately. Finite Gram search also extends to any fixed finite alphabet: the set of sums of $n$ alphabet products is finite. This remains true even when the full cyclotomic integer ring is not a discrete subset of the chosen complex embedding.

The arithmetic and coding parts require more care. For a prime alphabet, \cref{prop:prime-code} turns orthogonality into a balanced-difference condition, so any upper bound on the corresponding difference-matrix parameter is an obstruction to partial or full Butson matrices. A bound below $n$ proves nonexistence of $BH(n,p)$; it can also remove sparse Gram supports through their independence number. For a composite alphabet the single-character implication fails: $(1,1)$ and $(1,-1)$ are orthogonal fourth-root rows, but their differences are not uniform over $\mathbb Z_4$. Difference matrices over general finite abelian groups are instead characterized by vanishing for \emph{all} nontrivial characters. Coding-theoretic methods are therefore not confined to prime groups, although the particular equivalence used here is.

The order-15 result does not give a uniform gap from Hadamard's bound in an infinite congruence class. A useful quantitative target follows from the same envelope.
\begin{corollary}\label{cor:gap}
Let $H_n$ be square unit-modulus matrices and let $Q_n$ be their off-diagonal Gram energies. If $Q_n\ge cn^2$ for a fixed $c>0$, then
\[
 \limsup_{n\to\infty}\frac{|\det H_n|}{n^{n/2}}
 \le\left[(1+\sqrt{2c})e^{-\sqrt{2c}}\right]^{1/2}<1.
\]
\end{corollary}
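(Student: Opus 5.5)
The plan is to apply \cref{thm:envelope} to $A_n=H_nH_n^*$, which has order $m=n$ and constant diagonal $n$, so $\mu=n$. Since $\tr(A_n^2)=n\cdot n^2+2Q_n$, the variance parameter is $V=2Q_n$. This gives
\[
 t_n=\sqrt{\frac{2Q_n}{n(n-1)}}.
\]
Because $|\det H_n|^2=\det A_n$, dividing by $n^n$ turns the envelope \eqref{eq:envelope} into
\[
 \frac{|\det H_n|^2}{n^{n}}\le\Bigl(1-\frac{t_n}{n}\Bigr)^{n-1}\Bigl(1+\frac{(n-1)t_n}{n}\Bigr).
\]
The bound \eqref{eq:envelope} is monotone in $Q$: the same argument that makes $\U_{15}$ decreasing works for general $m$, since the derivative of $\log$ of the right side in $t$ is $-(m-1)/(\mu-t)+(m-1)/(\mu+(m-1)t)<0$. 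So the hypothesis $Q_n\ge cn^2$ lets me replace $t_n$ by the smaller value $s_n=\sqrt{2cn^2/(n(n-1))}$, which tends to $\sqrt{2c}$.

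Next I take logarithms. With $x=s_n/n\to0$, the first factor gives $(n-1)\log(1-s_n/n)\to-\sqrt{2c}$. The second factor tends to $1+\sqrt{2c}$. Hence the limsup of the squared ratio is at most $(1+\sqrt{2c})e^{-\sqrt{2c}}$, and taking square roots gives the stated bound. The strict inequality $<1$ is the elementary fact $1+y<e^{y}$ for $y=\sqrt{2c}>0$.

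The main obstacle is justifying the monotone substitution near the boundary of \cref{thm:envelope}. The bound needs $t\le\mu$, which is automatic for a PSD matrix. If $t_n=\mu$ the bound is $0$, consistent with the claim. If $c$ is so large that $s_n>n$, the hypothesis is vacuous, because $Q_n\le n^2(n-1)/2$ forces $2c\le n-1$ eventually, and that causes no trouble. I would therefore state the monotonicity on $[0,\mu]$ explicitly and note that for fixed $c$ we have $s_n\le n$ for large $n$. Everything else is the routine limit $(1-a/n)^{n}\to e^{-a}$, applied with $a=s_n$ convergent; uniformity follows from the bound $\log(1-x)\le -x$.
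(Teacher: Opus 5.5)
Your argument is correct and is essentially the paper's proof: apply \cref{thm:envelope} with $m=\mu=n$ and $V=2Q_n$, use monotonicity in $t$ to pass to the smaller value $\sqrt{2cn/(n-1)}\to\sqrt{2c}$, and take the limit after dividing by $n^n$. One small slip in your boundary remark: the trivial bound is $Q_n\le n^3(n-1)/2$, since each $|G_{ij}|^2\le n^2$ and the all-ones matrix attains this, not $n^2(n-1)/2$; this is harmless, because $s_n\le t_n\le\mu=n$ holds automatically whenever $Q_n\ge cn^2$.
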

\begin{proof}
Apply \cref{thm:envelope} with $m=\mu=n$ and use monotonicity in energy. The lower energy $cn^2$ corresponds to
$t_n=\sqrt{2cn/(n-1)}\to\sqrt{2c}$. After division by $n^n$, the squared-determinant bound tends to $(1+\sqrt{2c})e^{-\sqrt{2c}}$.
\end{proof}
Nonexistence of a Hadamard matrix only says that the energy is nonzero. Even a linear lower bound on the number of nonorthogonal pairs need not give the quadratic energy lower bound in \cref{cor:gap}. Establishing such a bound in a suitable infinite family is a distinct problem; no such claim is made here.

\section{Reproducibility and scope}\label{sec:repro}
The verification programs and exact data are available in the companion repository at
\url{https://github.com/ddy314/order15-mu3-maxdet}.
The programs under \file{proof/} generate the finite catalogues and verify the 38 coarse energy shells. The program \file{tools/verify_sharp_reduction.py} checks the reduction to 23 energies and the corresponding exclusion witnesses. Exact matrix data are stored under \file{data/}; \file{tools/verify_structure.py} verifies the extremizer and its automorphism group.

The complete verification is run from the repository root with
\begin{verbatim}
python -m pip install -r proof/requirements.txt
python verify_all.py
\end{verbatim}
The three stages may also be run separately:
\begin{verbatim}
python proof/run_final.py
python tools/verify_sharp_reduction.py
python tools/verify_structure.py
\end{verbatim}
Python assertions must remain enabled. The last command requires a C++17 compiler for the exhaustive automorphism enumeration.

The 38-shell verification includes two complete normalized-column scans for each of 194 Gram candidates. Each scan examines $3^{14}=4{,}782{,}969$ columns. These scans provide additional checks: the 23-shell proof excludes all 194 candidates by the orthogonality bound, using explicit independent sets of at least ten vertices. The sharp-reduction verifier reconstructs each Gram, checks its determinant, and supplies the independent-set witness.

The analytic and algebraic arguments are proved in the text. The finite support, norm, phase, and coefficient families are generated and checked by the repository programs. The results $B_3(15,10)=12$ and $M_3(15)=9$ are published external classifications; they are cited rather than regenerated here. This is a computer-assisted mathematical proof, not a proof-assistant formalization. It neither classifies all maximizing equivalence classes nor establishes the asymptotic hypothesis of \cref{cor:gap}.

\section*{Acknowledgment}
The author thanks P\'adraig \'O Cath\'ain for detailed comments on the manuscript and for emphasizing the connection with the real Gram-matrix search method and the structure of the extremizer.

\section*{AI use declaration}
AI tools contributed to the research reported in this paper; the author takes full responsibility for all of its content.

\appendix
\section{An explicit dephased matrix and its Gram}\label{app:matrix}
The following matrix $E_d$ encodes $H_d$ by $(H_d)_{ij}=\omega^{(E_d)_{ij}}$. Its first row and first column are zero.
\[
\setlength{\arraycolsep}{3.5pt}
\begin{pmatrix}
0&0&0&0&0&0&0&0&0&0&0&0&0&0&0\\
0&1&0&0&0&2&2&0&2&1&2&1&1&1&2\\
0&1&1&2&2&1&0&2&0&1&0&1&0&2&2\\
0&2&1&2&0&0&2&1&1&2&0&2&0&1&1\\
0&2&0&0&1&1&1&2&0&2&2&2&1&0&1\\
0&2&2&2&1&1&1&0&2&0&0&1&2&1&0\\
0&1&2&0&2&1&0&1&1&0&2&2&2&1&0\\
0&1&1&1&1&2&2&2&2&0&1&2&0&0&0\\
0&1&1&0&2&0&1&0&2&2&1&0&2&2&1\\
0&0&2&1&1&0&1&0&1&1&2&2&0&2&2\\
0&0&2&1&2&2&0&1&2&2&0&1&1&0&1\\
0&2&0&2&1&2&0&1&1&1&1&0&2&0&2\\
0&0&2&1&0&1&2&2&0&1&1&0&2&1&0\\
0&2&0&1&2&0&2&2&1&0&1&1&1&2&0\\
0&0&1&2&0&2&1&1&0&0&2&0&1&2&0
\end{pmatrix}

\]
Let
\begin{align*}
 \pi&=(1,15,2,4,3,5,6,7,8,14,9,12,10,11,13),\\
 a&=(0,1,1,2,2,0,0,0,2,1,2,1,0,0,0),\\
 b&=(0,0,2,1,0,1,2,2,0,1,1,0,2,1,0).
\end{align*}
Define $(H_c)_{ij}=\omega^{-a_{\pi_i}}(H_d)_{\pi_i,j}\omega^{-b_j}$. Its last row is all ones, its row Gram is exactly $G_c$ in \eqref{eq:canonical-gram}, and
\[
 \det H_c=241864704-362797056\omega.
\]
For an explicit expression for the Gram of the \emph{displayed dephased} matrix itself, let $P_{i,\pi_i}=1$ and $D=\diag(\omega^{-a_1},\ldots,\omega^{-a_{15}})$. Then
\begin{equation}\label{eq:dephased-gram}
 H_dH_d^*=D^*P^TG_cPD.
\end{equation}
Equations~\eqref{eq:canonical-gram} and \eqref{eq:dephased-gram}, with the explicit $P,D$ above, specify every Gram entry. The exact verification tool also writes both full Grams as integer pairs $[a,b]$ representing $a+b\omega$.

In the dephased row ordering, row 13 is the hub and the norm-9 matching pairs are
\[
 (1,15),\ (2,4),\ (3,5),\ (6,7),\ (8,14),\ (9,12),\ (10,11).
\]
Choosing one row from each pair gives every maximum orthogonal row subset.

\section{The triangle boundary at \texorpdfstring{$Q=81$}{Q=81}}\label{app:q81}
We give the decomposition argument for the no-isolate support $K_3\sqcup6K_2$ left in \cref{sec:finite}. The normalized triangle flux is a sixth root. The nonreal-flux triangle polynomial is coprime to $x^2-1$, so a triangle-to-pair rectangle would vanish by intertwining. Positive real flux gives normalized triangle spectrum $\{2,-1,-1\}$; against all twelve pair coordinates, \cref{thm:rectangle} gives $36\le24$. Only negative real flux remains. Its Gram spectrum is $\{18,18,9\}$, and both row and column Grams must have this case because their total spectra agree.

The eigenspace at 9 is supported on the three triangle coordinates on both sides. Its intertwiner has squared singular value 9. The corresponding $3\times3$ submatrix of $H$ has squared Frobenius norm 9, so it has rank one. Since its entries are third roots, allowed row and column phase changes make this submatrix $J_3$. The two triangle Gram blocks then become $18I_3-3J_3$: their eigenvector at 9 is $\one_3$, forcing both off-diagonal entries in each row to be $-3$.

Each pair Gram can be gauged by third-root phases to an off-diagonal entry $3$ or $-3$. A negative pair has a symmetric eigenvector at 12, an eigenvalue absent from the triangle block. Intertwining would require two third-root entries in each triangle column to sum to zero. This is impossible. Hence all six pair blocks are positive.

In pair-symmetric and pair-antisymmetric coordinates, each pair-to-pair block is
\[
 \begin{pmatrix}a&b\\b&a\end{pmatrix},\qquad a,b\in\muT.
\]
Write the resulting $6\times6$ blocks as $S=(a+b)$ and $D=(a-b)$. Let $X$ be the $6\times3$ matrix of pair-to-triangle rows, one row from each equal pair, and let $Y$ be the analogous $3\times6$ triangle-to-pair matrix. In orthonormal pair coordinates, the remaining block is
\[
 \begin{pmatrix}S&\sqrt2X\\\sqrt2Y&J_3\end{pmatrix},
 \qquad DD^*=D^*D=12I_6.
\]
Every row and column of $D$ has four nonzero entries of norm 3 and two zero entries. At a zero of $D$, the corresponding entry of $S$ is twice a third root; otherwise it is minus a third root.

The Gram equations give
\[
 X^*X=YY^*=9I_3-3J_3,\qquad X\one_3=0,\quad Y^*\one_3=0.
\]
A balanced third-root triple is, up to a third-root scalar, either
$u=(1,\omega,\omega^2)$ or $\bar u$. These two triples are orthogonal. The displayed Gram equations therefore force exactly three of each orbit among the six rows of $X$, and similarly among the six columns of $Y$. Independent pair phases and permutations put these into two groups of three identical triples.

The cross-Gram equations yield $SY^*=0$ and $S^*X=0$. Thus every row and column of $S$ sums to zero within each group of three. A group cannot contain two zeros of $D$, since $2a+2b-c=0$ has no solution in third roots. Each group therefore contains exactly one zero, and $2a-b-c=0$ forces $a=b=c$. It follows, using both row and column conditions, that each $3\times3$ block of $S$ has the form
\[
 \alpha(3P-J_3),\qquad \alpha\in\muT,\quad P\text{ a permutation matrix}.
\]
The two row groups of $X$ are orthogonal, so the off-diagonal block of $SS^*$ is zero. This would require
\[
 \alpha(9R-3J_3)+\beta(9T-3J_3)=0,
 \qquad\alpha,\beta\in\muT,\quad R,T\text{ permutation matrices}.
\]
If $R\ne T$, an entry where one is 1 and the other is 0 forces $6\alpha=3\beta$ or its reverse, contradicting equal moduli. If $R=T$, it forces $\alpha+\beta=0$, impossible for third roots. This contradiction completes the boundary case. The verification program also checks all 324 phase/permutation possibilities.

\clearpage
\section{Proof obligations and executable checks}\label{app:replay}
The following table identifies the generators and checks used in the proof. Paths are relative to \file{proof/}, except for the explicitly marked top-level tools. JSON outputs are reconstructed by the verification programs.
\small
\begin{longtable}{@{}>{\raggedright\arraybackslash}p{37mm}>{\raggedright\arraybackslash}p{107mm}@{}}
\toprule
Proof obligation & Executable source and finite coverage\\\midrule
\endfirsthead
\toprule Proof obligation & Executable source and finite coverage\\\midrule
\endhead
Arithmetic and color partitions & \file{order15_mu3_unified_audit/programs/verify_foundations.py}; all 136 entry triples, exact tail and partition inequalities.\\[3pt]
Low pure shells & \file{order15_mu3_unified_audit/programs/verify_low_energy.py}, \file{order15_mu3_unified_audit/programs/verify_q81.py}; graph-atlas supports, the explicitly generated larger low-energy graphs, every norm and chord phase, two-sided projection.\\[3pt]
Component maxima & \file{new/small_catalogue.py}, \file{new/bounds.py}; complete small support maxima, tree bounds, rational moment and eigenvalue bounds.\\[3pt]
Mixed-color bounds & \file{new/explore.py}; every component cost partition including isolates, one- and two-hub Schur bounds, cross and outside-pair energies.\\[3pt]
Mixed signatures & \file{new/rank_catalogue.py}, \file{new/mixed_rank_frontier.py}; exact nullity and kernel-support records, all compatible row/column signature pairs.\\[3pt]
$Q=114$ residual & \file{new/refine_q114.py}, \file{new/cross_exact.py}, \file{new/finalize_high.py}; eight configurations and the full core/cross search. The top-level sharp verifier supplies an independent-set witness for the final Gram.\\[3pt]
Pure component catalogues & \file{new/pure_catalogue.py}, \file{new/pure_extended.py}; spanning-tree plus extra-edge support generation through the required budgets, all norm placements and chord phases.\\[3pt]
$Q=90,108$ assemblies & \file{new/pure90_frontier.py}, \file{new/pure108_frontier.py}; both isolate regimes, exact norm and projection tests.\\[3pt]
$Q=99$ rectangles & \file{last_three/pure99_frontier.py}, \file{last_three/pure99_pairs.py}, \file{last_three/pure99_spectral.py}; 14 residual assemblies, 16 equal-spectrum pairs, all component unions.\\[3pt]
High pure shells & \file{new/explore_pure.py}, \file{new/projection_frontier.py}, \file{new/cap_exact.py}; component-cost partitions, capped moment extrema, iterative two-sided nullity restrictions.\\[3pt]
$Q=117,126$ arithmetic & \file{new/spectral_integer.py}, \file{new/finalize_high.py}; bounded integer characteristic coefficients, exact real-root isolation, full-determinant norm tests.\\[3pt]
Sharp reduction checks & top-level \file{tools/verify_sharp_reduction.py}; 38/29/23 energy sets, 194 explicit independent sets, $Q=114$, all residual rectangles, 285 real-root tests, five full-determinant norm obstructions.\\[3pt]
Witness and automorphisms & top-level \file{tools/verify_structure.py}, \file{tools/matrix_exact.py}, \file{tools/enumerate_aut.cpp}; exact dephasing and Gram arithmetic, Bareiss determinants, all 645120 row candidates, Fano action and group checks.\\\bottomrule
\end{longtable}

\end{document}